\def\m{\mathfrak m}
\def\a{\mathfrak a}
\def\b{\mathfrak b}
\newtheorem{theorem}{Theorem}[section]
\newtheorem{lemma}[theorem]{Lemma}
\newtheorem{corollary}[theorem]{Corollary}
\newtheorem{proposition}[theorem]{Proposition}
\newtheorem{proposition-no-advance}[equation]{Proposition}
\newtheorem{observation}[theorem]{Observation}
\newtheorem{construction}[theorem]{Construction}
\newtheorem{set-up}[theorem]{Set-up}
\newtheorem{quick consequences}[theorem]{Quick Consequences}
\newtheorem{question}[theorem]{Question}
\newtheorem{hypothesis-no-advance}[equation]{Hypothesis}
\newtheorem{claim-no-advance}[equation]{Claim}
\newtheorem{observation-no-advance}[equation]{Observation}
\theoremstyle{definition}
\newtheorem{facts and definitions}[theorem]{Facts and Definitions}
\newtheorem{definition}[theorem]{Definition}
\newtheorem{definition-no-advance}[equation]{Definition}
\newtheorem{setup}[theorem]{Setup}
\newtheorem{remark-no-advance}[equation]{Remark}
\newtheorem{remarks-no-advance}[equation]{Remarks}
\newtheorem{convention}[theorem]{Convention}
\newtheorem{careful calculation}[theorem]{Careful Calculation}
\newtheorem{present summary}[theorem]{Present Summary}
\newtheorem{example}[theorem]{Example}
\newtheorem{further reductions}[theorem]{Further Reductions}
\newtheorem{marching orders}[theorem]{Marching Orders}
\newtheorem{note}[theorem]{Note}
\begin{document}

\title[Totally Acyclic Complexes  Over Connected Sums with $\m ^3=0$]{Totally Reflexive Modules Over Connected Sums with $\m ^3 =0$}

\date{\today}

\author[A.~Vraciu]{Adela~Vraciu}
\address{Adela~Vraciu\\ Department of Mathematics\\ University of South Carolina\\
Columbia\\ SC 29208\\ U.S.A.} \email{vraciu@math.sc.edu}

\subjclass[2010]{13D02}

\begin{abstract}
We give a criterion for rings with $\m^3=0$ which are obtained as connected sums of two other rings to have non-trivial totally acyclic modules.
\end{abstract}
\maketitle

\section{Introduction}

\begin{convention}
The rings in this paper are Noetherian standard graded algebras over a field $k$.
We will use $[R]_i$ to denote the $i$th graded component of $R$, and $\m_R$ will denote the unique maximal homogeneous ideal of $R$.

A complex  $\cdots \rightarrow  R^{b_i}\buildrel{d_i}\over\rightarrow R^{b_{i-1}}\rightarrow \cdots $ of free modules is called {\em minimal} if $\mathrm{im}(d_i)\subseteq \m_R R^{b_{i-1}}$ for all $i$.

$(\ )^*$ denotes the functor $\mathrm{Hom}_R(\ , R)$, and is called {\em the dual}.

\end{convention}

Totally reflexive modules were introduced in \cite{AB}:
\begin{definition}
A finitely generated module $M$ is {\em totally reflexive} if it is isomorphic to a syzygy in a doubly infinite exact complex of free $R$-modules
$$\mathcal{F}_{\cdot}: \cdots\buildrel{d_{i+1}}\over \rightarrow  R^{b_i}\buildrel{d_i}\over\rightarrow R^{b_{i-1}}\buildrel{d_{i-1}}\over\rightarrow\cdots,
$$
such that the dual $\mathcal{F}^*_{\cdot}$ is also exact. Such a complex is called {\em totally acyclic}.

Equivalently, $M$ is totally reflexive if $\mathrm{Ext}_R^i(M, R)=\mathrm{Ext}^i_R(M^*, R)=0$ for all $i \ge 1$, and $M \cong M^{**}$.
\end{definition}

A ring $R$ is Gorenstein if and only if the totally reflexive $R$-modules are precisely the maximal Cohen-Macaulay modules. Totally reflexive modules play an important role in the theory of Gorenstein dimension, which is a generalization of projective dimension.

Exact zero divisors provide a particularly simple example of totally reflexive modules:
\begin{definition}
A pair of elements $a, b \in R$ is a pair of  {\em exact zero divisors} if $\mathrm{ann}_R(a)=(b)$ and $\mathrm{ann}_R(b)=(a)$. Then $R/(a)$ and $R/(b)$ are totally reflexive modules, and
$$
\cdots \rightarrow R \buildrel{a}\over\rightarrow R \buildrel{b}\over\rightarrow R \buildrel{a} \over\rightarrow \cdots
$$
is a totally acyclic complex.
\end{definition}

The following result motivates the investigation in this paper.
\begin{theorem}[\cite{CPST}, Theorem 4.3]
Assume that $R$ is not Gorenstein. Then there are either infinitely many isomorphism classes of indecomposable totally reflexive  modules, or the only totally reflexive  modules are free.
\end{theorem}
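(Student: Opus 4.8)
The plan is to prove the contrapositive: assume $R$ is not Gorenstein, that there are only finitely many isomorphism classes of indecomposable totally reflexive $R$-modules, and---toward a contradiction---that not every totally reflexive module is free. After replacing $R$ by its localization (or completion) at $\m_R$ I may choose a non-free \emph{indecomposable} totally reflexive module $M$; being indecomposable and non-free, $M$ has no free direct summand. Since a totally reflexive module of finite projective dimension is free (its Gorenstein dimension is $0$, and $\mathrm{pd}=\mathrm{Gdim}$ once $\mathrm{pd}$ is finite), $M$ has infinite projective dimension. Splicing a minimal free resolution of $M$ with the $R$-dual of a minimal free resolution of the totally reflexive module $M^{*}$ yields a minimal totally acyclic complex having $M$ as a syzygy, so \emph{every} syzygy and cosyzygy $\Omega^{i}M$ ($i\in\mathbb Z$) is again a non-free totally reflexive module.

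Next I would install the categorical framework. The class $\mathsf G$ of totally reflexive modules is closed under finite direct sums, direct summands, syzygies, cosyzygies, and extensions, so $\mathsf G$ is a Frobenius exact category whose projective--injective objects are exactly the free modules; hence the stable category $\underline{\mathsf G}$ is triangulated, with suspension the cosyzygy functor $\Omega^{-1}$. The finiteness hypothesis says that $\underline{\mathsf G}$ is a Krull--Schmidt triangulated category with only finitely many indecomposable objects, and by our assumption it is nonzero. Since $\underline\Omega$ is an autoequivalence of $\underline{\mathsf G}$, it permutes this finite set of indecomposables; hence every non-free totally reflexive module is $\Omega$-periodic. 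In particular $\mathrm{Ext}^{i}_{R}(M,M)\cong\underline{\mathrm{Hom}}_{R}(\Omega^{i}M,M)$ is periodic in $i$ and, whenever $i$ is a multiple of the period, equals $\underline{\mathrm{End}}_{R}(M)$, which is nonzero precisely because $M$ is not free; moreover the periodicity means that a suitable free module carries a square-zero endomorphism $\psi$ with $\ker\psi=\mathrm{im}\,\psi$ and non-free cokernel, realizing a $1$-periodic totally acyclic complex.

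The crux---and the step I expect to be the main obstacle---is to convert ``finitely many indecomposables together with one non-free one'' into a genuine contradiction with non-Gorensteinness; some such hypothesis is unavoidable, since over a Gorenstein ring totally reflexive $=$ maximal Cohen--Macaulay, and a simple hypersurface singularity has only finitely many indecomposable maximal Cohen--Macaulay modules, not all of them free. My plan is to pass to an additive generator $G$ of $\underline{\mathsf G}$ and exploit its stable endomorphism algebra $\Lambda=\underline{\mathrm{End}}_{R}(G)$: the finiteness hypothesis forces $\underline{\mathsf G}$ to carry Auslander--Reiten triangles and $\Lambda$ to be a finite-dimensional self-injective algebra, while a Harada--Sai argument bounds the lengths (resp.\ multiplicities) of the indecomposable totally reflexive modules. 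One then seeks a contradiction from the failure of the $R$-duality $(-)^{*}$ to equip $\mathsf G$ with the Serre-type symmetry that such a representation-finite Frobenius category must possess when $R$ is not Gorenstein, so that the Auslander--Reiten structure on $\underline{\mathsf G}$ cannot close up into a finite quiver and some component is forced to be infinite. A more hands-on alternative is to construct directly an unbounded chain of non-isomorphisms between indecomposable totally reflexive modules---using iterated non-split extensions among the $\Omega^{i}M$ together with cokernels of square-zero matrices as above---and to contradict Harada--Sai. In either route the real difficulty, and the place where the hypothesis ``$R$ not Gorenstein'' is indispensable, is ruling out that all of these constructions collapse back into the fixed finite list of indecomposables, which is precisely the phenomenon that occurs in the Gorenstein case. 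Once infinitude is forced, the contrapositive is complete.
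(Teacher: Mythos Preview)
The paper does not contain a proof of this theorem: it is quoted verbatim from \cite{CPST} (their Theorem~4.3) and serves only to motivate the paper's investigation of G-regularity. There is therefore nothing in the present paper to compare your proposal against.

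Regarding the proposal on its own merits: the reduction to the stable category $\underline{\mathsf G}$, the Frobenius structure, and the observation that a finite set of indecomposables forces $\Omega$-periodicity are all correct and standard. But the argument stops precisely at the decisive point. You explicitly flag that converting ``finitely many indecomposables plus one non-free one'' into a contradiction with non-Gorensteinness is ``the crux'' and ``the main obstacle,'' and then offer only two sketches---an AR-theoretic/Serre-duality obstruction and a Harada--Sai chain---neither of which is actually executed. Phrases like ``one then seeks a contradiction'' and ``the real difficulty \ldots\ is ruling out that all of these constructions collapse back'' are an honest admission that the proof is not complete. As written, the proposal is an outline with a genuine gap at exactly the step where the non-Gorenstein hypothesis must be used; filling it requires either the explicit Brauer--Thrall-type construction carried out in \cite{CPST} or an equivalent device, and you have not supplied one.
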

 Note that existence of non-free totally reflexive modules is equivalent to existence of minimal totally acyclic complexes.
\begin{definition}
A ring $R$ is called {\em G-regular} if the only totally reflexive  modules are the free modules. 
\end{definition}

There is no known criterion for deciding if a given non-Gorenstein ring is G-regular or not. In the case when $\m_R^3=0$, the following conditions are proved to be necessary  for the existence of minimal totally acyclic complexes:
\begin{theorem}[\cite{Yo}, Theorem 3.1]\label{Yoshino}
Let $(R, \m_R)$ be such that $R$ is not Gorenstein and $\m_R^3=0$. Assume that $R$ is not G-regular. Then:

a. $R$ is isomorphic to a graded $k$-algebra $k \oplus[ R]_1 \oplus[ R]_2$ and Koszul; in particular, the defining ideal of $R$ is generated by polynomials of degree 2.

b. $\mathrm{dim}_k([R]_2)=\mathrm{dim}_k([R]_1)-1$

c. If $\cdots \rightarrow  R^{b_i}\buildrel{d_i}\over\rightarrow R^{b_{i-1}}\rightarrow \cdots $ is a minimal totally acyclic complex, then $b_i=b_{i-1}$ for all $i$, and the maps $d_i$ are represented by matrices with entries in $[R]_1$.
\end{theorem}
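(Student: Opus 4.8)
The plan is to fix a minimal totally acyclic complex $\mathbf F\colon\ \cdots\to F_i\buildrel{d_i}\over\rightarrow F_{i-1}\to\cdots$ with $F_i=R^{b_i}$, set $\Omega_i=\mathrm{im}(d_i)=\ker(d_{i-1})$, and draw everything out of the interplay of $\m^3=0$, minimality, and the exactness of both $\mathbf F$ and its dual $\mathbf F^*$ (again minimal and totally acyclic, with syzygies $\Omega_i^*=\mathrm{Hom}_R(\Omega_i,R)$). The single formal observation used throughout: minimality gives $\mathrm{im}(d_i)\subseteq\m F_{i-1}$, so $\m^2\cdot\mathrm{im}(d_{i-1})\subseteq\m^3F_{i-2}=0$, hence $\m^2F_{i-1}\subseteq\ker(d_{i-1})=\Omega_i$; thus $\m^2F_{i-1}\subseteq\Omega_i\subseteq\m F_{i-1}$ and $\m^2\Omega_i=0$ for every $i$, and likewise for $\mathbf F^*$. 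Write $n=\mathrm{dim}_k[R]_1$ and $m=\mathrm{dim}_k[R]_2$, so $\mathrm{length}(R)=1+n+m$.

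The technical heart, and the step I expect to be the main obstacle, is to prove that in suitable homogeneous bases every entry of every $d_i$ lies in $[R]_1$. Write $d_i=\delta_i+\varepsilon_i$ with $\delta_i$ collecting the degree-one entries and $\varepsilon_i$ the degree-two ones; since $[R]_3=0$, the relation $d_id_{i+1}=0$ only says $\delta_i\delta_{i+1}=0$ (the cross terms and $\varepsilon_i\varepsilon_{i+1}$ vanish outright), so the $\varepsilon_i$ are a priori unconstrained and it is exactness that must force $\varepsilon_i=0$. The mechanism I would use weighs columns of $d_i$ against rows: a column of $d_i$ lying entirely in $\m^2$ picks out a free summand $Rf\subseteq F_i$ with $d_i(f)\in\m^2F_{i-1}\subseteq\Omega_i$, and, because $\m^2\cdot\m=0$, the relation $d_id_{i+1}=0$ then decouples $f$ from the rest in that $d_i$ restricted to a complement of $Rf$ already annihilates the corresponding truncation of $d_{i+1}$. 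One then argues that $f$ cannot be absorbed: were $d_i(f)$ in the image of $d_i$ restricted to that complement, some element of $\ker d_i=\mathrm{im}(d_{i+1})$ would carry a unit coordinate, contradicting minimality; and the remaining possibility is ruled out by transposing and invoking the exactness of $\mathbf F^*$ together with $\m^2F_i^*\subseteq\mathrm{im}(d_i^*)$. Making this dichotomy airtight — and running the symmetric argument on the rows of $d_i$ — is the delicate point. Its payoff: $\mathbf F$ is a direct sum of linear strands, so passing via Krull--Schmidt to an indecomposable nonfree totally reflexive direct summand $M$ of $\Omega_0$ gives such an $M$ generated in a single degree, with linear minimal free resolution and $\m^2M=0$; moreover $\m\Omega_i=\m^2F_{i-1}$ for every $i$ (indeed $\m\Omega_i\subseteq\m^2F_{i-1}$ since $\Omega_i\subseteq\m F_{i-1}$, while $\m^2F_{i-1}\subseteq\Omega_i$ occupies the single graded degree in which $\m\Omega_i$ sits), and similarly $\m\Omega_i^*=\m^2F_i^*$.

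Part a follows next. Normalize $M$ to be generated in degree $0$; since $\m^2M=0$, both $M/\m M=[M]_0$ and $\m M=[M]_1$ are $k$-vector spaces, giving a graded short exact sequence $0\to k(-1)^N\to M\to k^b\to 0$ with $b=\mathrm{dim}_k(M/\m M)$, $N=\mathrm{dim}_k(\m M)$. In the associated long exact sequence of $\mathrm{Tor}^R(-,k)$ fix an internal degree $j>i$: because $M$ has a linear resolution, $\mathrm{Tor}^R_{i,j}(M,k)=\mathrm{Tor}^R_{i-1,j}(M,k)=0$, so $\mathrm{Tor}^R_{i,j}(k,k)^{b}\cong\mathrm{Tor}^R_{i-1,j-1}(k,k)^{N}$; iterating down to homological degree $0$ and using $\mathrm{Tor}^R_{0,j-i}(k,k)=0$ for $j>i$ gives $\mathrm{Tor}^R_{i,j}(k,k)=0$ whenever $j>i$, i.e.\ $R$ is Koszul. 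That $R\cong k\oplus[R]_1\oplus[R]_2$ as a graded algebra is automatic from the standing convention and $\m^3=0$, and a quadratic defining ideal is the standard consequence of Koszulness; this is part a.

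Finally, b and the numerical assertion of c come from two length counts. From $0\to\Omega_{i+1}\to F_i\to\Omega_i\to 0$ and $\mathrm{length}(\Omega_i)=\mathrm{dim}_k(\Omega_i/\m\Omega_i)+\mathrm{dim}_k(\m\Omega_i)=b_i+mb_{i-1}$ one gets $(1+n+m)b_i=\mathrm{length}(\Omega_{i+1})+\mathrm{length}(\Omega_i)$, hence $b_{i+1}=nb_i-mb_{i-1}$. Running the identical count for $\mathbf F^*$, whose syzygy $\Omega_i^*$ is minimally generated by $b_{i-1}$ elements and has $\m\Omega_i^*=\m^2F_i^*$, so $\mathrm{length}(\Omega_i^*)=b_{i-1}+mb_i$, gives $mb_{i+1}=nb_i-b_{i-1}$. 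Subtracting yields $(m-1)(b_{i+1}-b_{i-1})=0$. If $m\neq 1$ this makes $(b_i)$ two-periodic, and feeding that back into $b_{i+1}=nb_i-mb_{i-1}$ forces $b_i$ constant and $m=n-1$. The case $m=1$ is excluded: then $n\geq 3$ (if $n=2$ then $m=n-1$ already), and the Hilbert series $1+nt+t^2$ is not of the form $1+nt+(n-1)t^2$, so $R$ is a non-hypersurface Golod ring; but totally reflexive modules over such rings are free, contradicting that $R$ is not G-regular. Together with the linearity of the $d_i$ from the second step, this gives b and c, completing the proof.
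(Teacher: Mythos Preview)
The paper does not prove this theorem: it is stated as Theorem~3.1 of \cite{Yo} and quoted as background, with no argument supplied. There is therefore no ``paper's own proof'' to compare your proposal against.

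As for the proposal itself, it is broadly in the spirit of Yoshino's original argument (length counts on $\Omega_i$ and $\Omega_i^*$ to pin down the recursion on Betti numbers, and a Tor analysis to get Koszulness), and the numerical portion --- the derivation of $b_{i+1}=nb_i-mb_{i-1}$, $mb_{i+1}=nb_i-b_{i-1}$, and hence $(m-1)(b_{i+1}-b_{i-1})=0$ --- is correct \emph{once} you know $\mathfrak m\Omega_i=\mathfrak m^2F_{i-1}$ and $\mathfrak m\Omega_i^*=\mathfrak m^2F_i^*$. However, your route makes those equalities depend on the linearity of the $d_i$, which you flag as the ``delicate point,'' and the mechanism you sketch for linearity does not quite work as written. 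You argue that a column of $d_i$ lying in $\mathfrak m^2$ gives $d_i(f)\in\mathfrak m^2F_{i-1}\subseteq\Omega_i$; but $d_i(f)\in\Omega_i=\operatorname{im}(d_i)$ is a tautology, so the dichotomy you set up (``either $d_i(f)$ is in the image of the restriction to a complement, or \ldots'') carries no force. The genuine contradiction comes instead from observing that such a column forces $\mathfrak m\cdot f\subseteq\ker(d_i)=\operatorname{im}(d_{i+1})$, which injects $n$ new linearly independent minimal generators into $\Omega_{i+1}$, and then tracking how this propagates (together with the dual statement for rows) to violate finiteness of ranks. Your handling of the residual case $m=1$ via Golod rings is also underspecified: you would need to argue carefully that a non-Gorenstein ring with $\mathfrak m^3=0$ and $\dim_k[R]_2=1$ is Golod (for instance because the socle then meets $[R]_1$, forcing a decomposable maximal ideal), and then cite the appropriate result on G-regularity of Golod rings.
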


Even for rings with $\m_R ^3=0$, there are no known necessary and sufficient conditions for G-regularity.
In \cite{AV}, it was shown that rings obtained from Stanley-Reisner rings of graphs after modding out by a linear system of parameters satisfy $\m_R^3=0$, and some conditions for G-regularity of such rings were studied. Example (4.1) in \cite{AV} prompted us to consider the class of rings studied in this paper.

Fiber product rings have come to the attention of homological commutative algebraists in recent years. It was shown in~\cite{NSW} that if $\mathrm{Tor}^R_i(M, N)=0$ for all $i \gg 0$, where $M$ and $N$ are finitely generated modules over a ring $R$ which is a local Artinian fiber product ring over a field, then at least one of $M$ or $N$ is free. Since the condition $\mathrm{Ext}^i_R(M, R)=0$ in the definition of a totally reflexive module is equivalent by Matlis duality to $\mathrm{Tor}_i(M, \omega _R)=0$, where $\omega _R$ is the canonical module of $R$ (see Observation 2.10.2 in \cite{Kustin-V}), it follows that every such ring is either Gorenstein or G-regular.

Fiber product rings can be characterized by the condition that the maximal ideal is decomposable, i.e. $\m_R=\a \oplus \b$ for some ideals $\a$, $\b$.
In this paper, we look at rings with the property that $\m_R=\a + \b$ for some ideals $\a, \b$ with $\a \cdot \b =(0)$ and $\a \cap \b = (\delta )$, with $\delta \in \m_R^2$  (if $\delta \in \m_R \, \backslash \m_R^2$, we could write $\m_R$ as a direct sum of some smaller ideals $\a' $, $\b'$). We show that such rings can be obtained as quotients of fiber products by one element. These rings are connected sums in the sense of~\cite{AAM}.
We study the existence of totally reflexive modules for such rings under the additional assumption that $\m_R^3=0$  in terms of the existence of totally reflexive modules for the two rings involved in the fiber product.

Numerous examples of such rings can be obtained from graphs. Let $\Gamma $ be a connected bipartite graph with vertex set $\{{\bf x}_1, \ldots, {\bf x}_n, {\bf y}_1, \ldots, {\bf y}_m\}$ such that every edge connects an ${\bf x}_i$ to a ${\bf y}_j$. Assume that the induced graph on $\{{\bf x}_1, \ldots, {\bf x}_{n-1}, {\bf y}_1, \ldots, {\bf y}_{m-1}\}$ is disconnected and it has two connected components, $A$ and $B$. Also assume that ${\bf x}_n$ and ${\bf y}_m$ are not connected by an edge.

Let $R_{\Gamma}$ denote the Stanley-Reisner ring of $\Gamma$ over a fixed field $k$, and $R=R_{\Gamma}/(l_1, l_2)$, where $l_1=\sum_{i=1}^n X_i$  and $l_2=\sum_{j=1}^m Y_j$. We can view $R$ as a quotient of $k[X_1, \ldots, X_{n-1}, Y_1, \ldots, Y_{m-1}]$. It was shown in \cite{AV} that $(R, \m)$ has $\m_R^3=0$. Let $\mathfrak{a}$ denote the ideal generated by the images of variables corresponding to vertices in $A$ and $\mathfrak{b}$ the ideal generated by the images of variables corresponding to vertices in $B$. We have $\m = \a + \b$ and $\a \cdot \b =(0)$. Let $f=\sum_{i=1}^{n-1}x_i=-x_n$ and $g=\sum_{j=1}^{m-1}y_j=-y_m$, where $x_i$ and $y_j$ denote the images of $X_i$ and respectively $Y_j$ in $R$. Since ${\bf x}_n$ and ${\bf y}_m$ are not connected by an edge,  we have $fg=0$. We  write $f=f_A+f_B$, $g=g_A+g_B$, where $f_A$ is the sum of the $x_j$'s that are in $A$, etc.

We have $0=fg=f_Ag_A+f_Bg_B$  therefore, $\delta :=f_Ag_A=-f_Bg_B \in \a \cap \b$. There are no other elements in $\a\cap \b$. Since $\m_R ^3=(0)$, a non-zero element in the intersection would have to be $\displaystyle \sum_{x_i, y_j \in A}  x_iy_j= \sum _{x_i', y_j' \in B} x_i'y_j'$. Inspecting the defining equations of the Stanly-Reisner ring, we see that no such relation exists other than $fg=0$.

Proposition 3.9 in \cite{AV} shows that rings obtained from the constructin described above do not have exact zero divisors. On the other hand, Example 4.1 in \cite{AV} is an example of such a ring that has non-free totally reflexive modules. The rings studied in this paper can be viewed as generalizations of this example.

\section{Construction and set up}

\begin{observation}\label{setup}
 The following are equivalent:

{\rm 1.} The  maximal homogeneous ideal $\m_R$ can be decomposed as $\m_R=\mathfrak{a}+\mathfrak{b}$ with $\mathfrak{a}\mathfrak{b}=(0)$ and $\mathfrak{a} \cap \mathfrak{b}=(\delta_1, \ldots, \delta _s)$.

{\rm 2.}  $R$ is isomorphic to a ring of the form
\begin{equation}\label{def}
\frac{P}{I_1P+I_2P+(f_1-g_1, \ldots, f_s-g_s)+(x_iy_j \, | \, 1\le i \le n, 1 \le j \le m)}
\end{equation}
where $P=k[x_1, \ldots, x_n, y_1, \ldots, y_m]$, $I_1, I_2$ are ideals in $P_1:=k[x_1, \ldots, x_n]$, respectively $P_2:=k[y_1, \ldots, y_m]$, $f_1, \ldots, f_s \in P_1, g_1, \ldots, g_s \in P_2.$
\end{observation}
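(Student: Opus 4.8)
The plan is to establish the two implications separately; the common mechanism is that killing the products $x_iy_j$ turns $P$ into the fibre product $P/(x_iy_j\mid i,j)\cong P_1\times_k P_2$, inside which every element is uniquely of the form $c+u+v$ with $c\in k$, with $u$ a polynomial without constant term in the $x$'s only, and with $v$ likewise in the $y$'s only; in particular the images of $\m_{P_1}=(x_1,\dots,x_n)$ and of $\m_{P_2}=(y_1,\dots,y_m)$ meet only in $(0)$. It is also worth recording at the outset that, under the hypotheses of (1), $\m_R\,\delta_i=(0)$ for every $i$: since $\delta_i\in\mathfrak a\cap\mathfrak b$ we have $\m_R\delta_i=(\mathfrak a+\mathfrak b)\delta_i\subseteq\mathfrak a\mathfrak b=(0)$. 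This last fact is what lets the coefficients appearing in front of the $\delta_i$ be replaced by scalars.

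For (2)$\Rightarrow$(1): write $R=P/J$ with $J$ the ideal in \eqref{def}, and let $\mathfrak a,\mathfrak b$ be the images in $R$ of $(x_1,\dots,x_n)$ and $(y_1,\dots,y_m)$. Then $\m_R=\mathfrak a+\mathfrak b$ since the $x_i,y_j$ generate $\m_P$, and $\mathfrak a\mathfrak b=(0)$ since $x_iy_j\in J$. Because $R\neq 0$ forces $J\subseteq\m_P$, the constant terms of $f_i$ and $g_i$ agree, and after subtracting them (which alters neither $J$ nor the generators $f_i-g_i$) we may assume $f_i\in\m_{P_1}$, $g_i\in\m_{P_2}$, and similarly that $I_1\subseteq\m_{P_1}$, $I_2\subseteq\m_{P_2}$ (otherwise $R$ degenerates and the presentation can be trimmed); then $\delta_i:=\overline{f_i}=\overline{g_i}\in\mathfrak a\cap\mathfrak b$, so $(\delta_1,\dots,\delta_s)\subseteq\mathfrak a\cap\mathfrak b$. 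For the reverse inclusion take $z\in\mathfrak a\cap\mathfrak b$, lift it to $p\in\m_{P_1}$ and to $p'\in\m_{P_2}$ (possible since $\mathfrak a$, resp. $\mathfrak b$, is the image of $\m_{P_1}$, resp. $\m_{P_2}$), so that $p-p'\in J$, expand $p-p'$ in the generators of $J$, and pass to $P_1\times_k P_2$; comparing the ``$x$-part plus constants'' of the two sides yields an identity $p=w+\sum_i h_if_i$ in $P_1$ with $w\in I_1$ and $h_i\in P_1$. Reducing mod $J$ kills $w$, so $z=\overline p=\sum_i\overline{h_i}\,\delta_i\in(\delta_1,\dots,\delta_s)$.

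For (1)$\Rightarrow$(2): choose finitely many homogeneous algebra generators of $R$, each lying in $\mathfrak a$ or in $\mathfrak b$ (possible because $R$ is standard graded, so a $k$-basis of $[R]_1$ chosen inside $[\mathfrak a]_1\cup[\mathfrak b]_1$ already generates $R$), arranged so that $\delta_1,\dots,\delta_s$ occur among the generators in $\mathfrak a$ and also among those in $\mathfrak b$. Let $\phi\colon P\twoheadrightarrow R$ send the $x$-variables to the chosen generators in $\mathfrak a$ and the $y$-variables to those in $\mathfrak b$, with distinguished variables $f_1,\dots,f_s\in P_1$ and $g_1,\dots,g_s\in P_2$ satisfying $\phi(f_i)=\phi(g_i)=\delta_i$, and put $I_1=\ker(\phi|_{P_1})$, $I_2=\ker(\phi|_{P_2})$. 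Then $x_iy_j$, $f_i-g_i$, $I_1P$ and $I_2P$ all lie in $\ker\phi$, which gives one inclusion $J\subseteq\ker\phi$; the point is $\ker\phi\subseteq J$. Given $q\in\ker\phi$, reduce modulo $(x_iy_j)$ and write the image in $P_1\times_k P_2$ as $c+\tilde u+\tilde v$ with $c\in k$, $\tilde u\in\m_{P_1}$, $\tilde v\in\m_{P_2}$; applying $\phi$ and using that $c\in k$ while $\phi(\tilde u)\in\mathfrak a$ and $\phi(\tilde v)\in\mathfrak b$ lie in $\m_R$ forces $c=0$, after which $\phi(\tilde u)=-\phi(\tilde v)$ is simultaneously in $\mathfrak a$ and in $\mathfrak b$, hence in $\mathfrak a\cap\mathfrak b=(\delta_1,\dots,\delta_s)$. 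Writing $\phi(\tilde u)=\sum_i r_i\delta_i$ and using $\m_R\delta_i=(0)$ to replace each $r_i$ by its scalar term $c_i\in k$, we get $\phi(\tilde u)=\sum_i c_i\delta_i=\phi(\sum_i c_if_i)$ and $\phi(\tilde v)=-\sum_i c_i\delta_i=\phi(-\sum_i c_ig_i)$ \emph{with the same} $c_i$; hence $\tilde u-\sum_i c_if_i\in I_1$ and $\tilde v+\sum_i c_ig_i\in I_2$, and therefore $q\equiv\tilde u+\tilde v\equiv\sum_i c_i(f_i-g_i)$ modulo $I_1P+I_2P+(x_iy_j)$, i.e.\ $q\in J$.

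The only genuinely delicate step, in either direction, is the passage to $P_1\times_k P_2$ and the clean separation of the two legs in the expansion of an element of the defining ideal — above all, making precise that the ``$x$-part'' of such an element lies in $I_1P+(f_1,\dots,f_s)$ modulo $(x_iy_j)$. The hypothesis $\mathfrak a\mathfrak b=(0)$ is used decisively there, and, through $\m_R\delta_i=(0)$, it is exactly what forces the two families of coefficients to coincide in the final step of (1)$\Rightarrow$(2); without it the two legs could overlap beyond the constants and the bookkeeping would break down.
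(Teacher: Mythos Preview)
Your argument follows the same mechanism as the paper's: pass to the fibre product $P/(x_iy_j)\cong P_1\times_k P_2$ and exploit the clean $P_1/P_2$ splitting there. You are in fact more explicit than the paper, which dispatches $(2)\Rightarrow(1)$ with a single ``similar lines''; your computation of $\mathfrak a\cap\mathfrak b$ in that direction is correct, and your preliminary remark $\m_R\delta_i=0$ is exactly what underlies the paper's unproved assertion that a minimal generating set of $J/(I_1P+I_2P+(x_iy_j))$ corresponds to one for $\mathfrak a\cap\mathfrak b$.

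There is, however, one genuine slip in your setup for $(1)\Rightarrow(2)$. You arrange that ``$\delta_1,\dots,\delta_s$ occur among the generators in $\mathfrak a$ and also among those in $\mathfrak b$'' and then take $f_i,g_i$ to be \emph{variables} of $P_1,P_2$. Since you have just said the generators form a $k$-basis of $[R]_1$, this forces $\delta_i\in[R]_1$. But the paper is explicitly interested in the case $\delta\in\m_R^2$ (see the Introduction and Setup~2.4), where this is impossible, so as written your presentation does not cover the intended situation. The repair costs nothing: do not ask $f_i,g_i$ to be variables; simply choose homogeneous polynomials $f_i\in\m_{P_1}$ and $g_i\in\m_{P_2}$ with $\phi(f_i)=\phi(g_i)=\delta_i$, which exist because $\delta_i\in\mathfrak a\cap\mathfrak b$ lies in the image of both $\phi|_{P_1}$ and $\phi|_{P_2}$ (this is the choice the paper makes). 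Every subsequent line of your proof of $\ker\phi\subseteq J$ uses only the relation $\phi(f_i)=\phi(g_i)=\delta_i$, so it survives unchanged. A related point: to guarantee those preimages exist you want the $x$-variables to generate $\mathfrak a$ as an ideal (and likewise for $\mathfrak b$), so it is safer to take them to span $[\mathfrak a]_1$ and $[\mathfrak b]_1$ separately rather than merely to form a basis of $[R]_1$ drawn from $[\mathfrak a]_1\cup[\mathfrak b]_1$; the latter need not yield $(x_1,\dots,x_n)R=\mathfrak a$ when $[\mathfrak a]_1\cap[\mathfrak b]_1\neq 0$.
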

\begin{proof}
Assume {\rm 1.} 
Write
$R=P/J$ with $P=k[x_1, \ldots, x_n, y_1, \ldots, y_m]$, $\mathfrak{a} =(x_1, \ldots, x_n)$, $\mathfrak{b}=(y_1, \ldots, y_m)$. The assumption that $\mathfrak{a} \mathfrak{b}=(0)$ shows that $J_0:=(x_iy_j\, | \, 1\le i \le n, 1\le j \le m)\subseteq J$. We have canonical homomorphisms $P_1 \rightarrow R$ and $P_2 \rightarrow R$;  let $I_1$, respectively $I_2$ denote the kernels of these homomorphisms. Then $I_1P+I_2P \subseteq J$.  Let $R_0:=P_1/I_1$ and $S_0:=P_2/I_2$; these are isomorphic to subrings of $R$. We identify elements in $R_0, S_0$ with their images in $R$.
Modulo $J_0$, every element of $P$ can be written as $f-g$ with $f \in P_1$ and $g \in P_2$. 
Thus we  write $J=I_1P+I_2P+J_0 + (f_1-g_1, \ldots, f_t-g_t)$ for some $f_1, \ldots, f_t\in P_1, g_1, \ldots, g_t \in P_2$. We may assume that all $f_j, g_j$ are nonzero (if $f_j=0$, then $g_j \in J\Leftrightarrow g_j \in I_2P$). Note that $f_j$ and $g_j$ have the same image in $R$, which is therefore in $\mathfrak{a} \cap \mathfrak{b}$. A minimal generating set $f_1-g_1, \ldots, f_t-g_t$ for $J/I_1P+I_2P+J_0$ corresponds to a minimal generating set of $\mathfrak{a} \cap \mathfrak{b}$, thus $t=s$.

The proof of the converse follows along similar lines.
\end{proof}

\begin{note}
The ring $R$ described in~(\ref{def}) is a quotient of a fiber product:
$$
R=\frac{R_0 \times_k S_0}{(f_1-g_1, \ldots, f_s-g_s)},
$$
where $R_0=P_1/I_1, S_0=P_2/I_2$, and 
$$\displaystyle R_0 \times _k S_0=\frac{P}{I_1P+I_2P+(x_iy_j \, | \, 1 \le i \le n, 1\le j \le m)}.
$$
is the fiber product of $R_0$ and $S_0$ over $k$.
By abusing notation, we use $f_1, \ldots, f_s$ to denote the images of $f_1, \ldots, f_s \in P_1$ in $R_0$. Similarly for $g_1, \ldots, g_s$.
\end{note}
\begin{note}
If $f_1, \ldots, f_s \in \mathrm{Soc}(R_0)$ and $g_1, \ldots, g_s \in \mathrm{Soc}(S_0)$, then $R$ is a connected sum in the sense of~\cite{AAM}.
\end{note}

Connected sums of Gorenstein rings have received a lot of attention lately (see ~\cite{AAM}, \cite{ACLY}, \cite{CLW}). However, the connected sums we study in this paper are  non-Gorenstein.

We will focus on the case $s=1$. The following  notation will be in effect for the rest of the paper.

\begin{setup}\label{st-up}
Let $R$ be as in ~(\ref{def}), with $s=1$. Assume moreover that $\m_R^3=0$, and $f:=f_1, g:=g_1$  are nonzero elements of $R_0$, respectively $S_0$ of  degree two.

Denote $$\displaystyle R_0=\frac{P_1}{I_1}, \ \   R_1=\frac{P_1}{I_1+(f)}, \ \  S_0=\frac{P_2}{I_2}, \ \  S_1=\frac{P_2}{I_2+(g)},$$ $$\mathfrak{a}=(x_1, \ldots, x_n)R, \ \ \ \ \mathfrak{b}=(y_1, \ldots, y_m)R.$$

We have injective homormorphisms $\phi_1: R_0\rightarrow R$ and $\phi_2: S_0\rightarrow R$ induced by the inclusions $P_1 \subseteq P$ and $P_2 \subseteq P$. We will identify $R_0$ with $\mathrm{im}(\phi_1)$, which is the subring of $R$ generated by $\a$, and $S_0$ with $\mathrm{im}(\phi_2)$, which is the subring of $R$ generated by $\b$.
Note that $m_R^3=0 \Leftrightarrow \a ^3=\b^3=0 \Leftrightarrow \m_{R_0}^3=\m_{S_0}^3=0$. 

Assume $d:R^b \rightarrow  R^c$ is a degree one homomorphism of graded $R$-modules. There is a matrix  representation of $d$ of the form $A'+B'$, where $A'$ is a $c\times b$ matrix with entries in $\mathfrak{a}$ and $B'$ is a $c\times b$ matrix with entries in $\mathfrak{b}$.

 We can view $A'$ as a map $\mathrm{im}(\phi_1) ^b \rightarrow \mathrm{im}(\phi_1)^c$, and $B'$ as a map $: \mathrm{im}(\phi_2)^b \rightarrow \mathrm{im}(\phi_2)^c$. When $R_0$ is identified with $\mathrm{im}(\phi_1)$ and $S_0$ is identified with $\mathrm{im}(\phi_2)$, $A'$ and $B'$ correspond to maps $\tilde{A}:R_0^b\rightarrow R_0^c$ and $\tilde{B}:S_0^b \rightarrow S_0^c$ respectively.

 The assumption $\m_R^3=0$ guarantees that $\tilde{A}$ and $\tilde{B}$ map every element of degree two to zero, and therefore there are  induced maps $A:R_1^b \rightarrow R_1^c$, and $B:S_1^b \rightarrow S_2^c$. 

The process can be reversed as follows:  given maps $A:R_1^b \rightarrow R_1^c$ and $B:S_1^b \rightarrow S_1^c$ which are graded homomorphisms of degree one, there are  unique liftings $\tilde{A}: R_0^b \rightarrow R_0^c$ and $\tilde{B}:S_0^b \rightarrow S_0^c$ which map $f$ and $g$ to zero, and these can be identified with $c\times b$ matrices $A'$ and $B'$ with entries in $\mathfrak{a}$ and respectively $\mathfrak{b}$, giving rise to a homomorphism $d:R^b \rightarrow R^c$ represented by the matrix $A'+B'$.

Similarly, a vector in $R^b$ can be written  (uniquely, if all entries are linear) as $x'+y'$ where $x'$ has all components in $\a$ and $y'$ has all components in $\b$. These are identified with vectors $\tilde{x} \in R_0^b$ and $\tilde{y} \in S_0^b$. The images of $\tilde{x}$ in $R_1^b$ and of $\tilde{y}$ in $S_1^b$ will be denoted $x$ and $y$ respectively.

\end{setup}
\begin{observation}
$R$ is Gorenstein if and only if $R_0$ and $S_0$ are Gorenstein. 
\end{observation}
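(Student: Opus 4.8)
My plan is to reduce the claim to a single socle-dimension count, which is clean here because $\m_R^3=0$ (equivalently $\m_{R_0}^3=\m_{S_0}^3=0$) places the entire degree-two part of each ring inside its socle. First I would observe that $R$, $R_0$, $S_0$ are graded Artinian $k$-algebras, so each is Gorenstein exactly when its socle is one-dimensional over $k$; and that for any standard graded $k$-algebra $T$ with $\m_T^3=0$ one has $[T]_2\subseteq\mathrm{Soc}(T)$, whence $\mathrm{Soc}(T)=[T]_2\oplus\bigl(\mathrm{Soc}(T)\cap[T]_1\bigr)$ and $\dim_k\mathrm{Soc}(T)=\dim_k[T]_2+\dim_k\bigl(\mathrm{Soc}(T)\cap[T]_1\bigr)$. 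Since $f$ and $g$ are nonzero of degree two, $R_0\neq k\neq S_0$, so $\dim_k\mathrm{Soc}(R_0)\geq\dim_k[R_0]_2\geq 1$ and likewise for $S_0$; this inequality does the work at the end.

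Next I would record two facts about the graded pieces of $R$. Writing $\a_1,\b_1$ for the degree-one parts of $\a,\b$: (i) $[R]_1=\a_1\oplus\b_1$, the sum being direct because $\a\cap\b=(\delta)\subseteq[R]_2$, and under $\phi_1,\phi_2$ these summands are identified with $[R_0]_1$ and $[S_0]_1$; (ii) from $\a\b=0$ one gets $[R]_2=\m_R^2=\a^2+\b^2$, and $\phi_1,\phi_2$ restrict to \emph{injective} maps $[R_0]_2\hookrightarrow[R]_2$ and $[S_0]_2\hookrightarrow[R]_2$ with images $\a^2,\b^2$; the injectivity is where $f\neq 0$, $g\neq 0$ enters, since identifying $f$ with $g$ then collapses nothing inside either summand. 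Because $\a^2\cap\b^2\subseteq\a\cap\b=(\delta)=k\delta$, this gives $\dim_k[R]_2=\dim_k[R_0]_2+\dim_k[S_0]_2-1$.

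The key step is to identify $\mathrm{Soc}(R)\cap[R]_1$. I would take a linear $r=x'+y'$ with $x'\in\a_1$, $y'\in\b_1$ and compute $\m_R r$: since $\a\b=0$ and $[R]_3=0$, this collapses to $\a_1 x'+\b_1 y'$, a subspace of $\a^2$ plus a subspace of $\b^2$, which by (ii) is zero if and only if $\a_1 x'=0$ and $\b_1 y'=0$, i.e. if and only if $x'\in\mathrm{Soc}(R_0)\cap[R_0]_1$ and $y'\in\mathrm{Soc}(S_0)\cap[S_0]_1$. Hence $\dim_k\bigl(\mathrm{Soc}(R)\cap[R]_1\bigr)=\dim_k\bigl(\mathrm{Soc}(R_0)\cap[R_0]_1\bigr)+\dim_k\bigl(\mathrm{Soc}(S_0)\cap[S_0]_1\bigr)$. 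Adding this to the formula for $\dim_k[R]_2$ from the previous paragraph yields
\[
\dim_k\mathrm{Soc}(R)=\dim_k\mathrm{Soc}(R_0)+\dim_k\mathrm{Soc}(S_0)-1 .
\]
Since each summand on the right is at least $1$, the left side equals $1$ if and only if both are $1$, i.e. $R$ is Gorenstein if and only if $R_0$ and $S_0$ both are.

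I do not expect a serious obstacle; this is essentially bookkeeping. The two points to handle carefully are the reduction $\m_R r=\a_1 x'+\b_1 y'$ for linear $r$ (which genuinely uses $\a\b=0$) and the injectivity of $[R_0]_2,[S_0]_2\to[R]_2$ (which genuinely uses $f,g\neq 0$). One could instead cite the socle formula for connected sums from \cite{AAM}, but under $\m^3=0$ the direct argument is short enough to include in full.
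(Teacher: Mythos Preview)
Your proposal is correct. Both arguments rest on the same core observation---that a maximal-ideal element $x'+y'$ (with $x'\in\a$, $y'\in\b$) lies in $\mathrm{Soc}(R)$ if and only if $\tilde{x}\in\mathrm{Soc}(R_0)$ and $\tilde{y}\in\mathrm{Soc}(S_0)$---but the packaging differs. The paper argues each implication separately by direct element chasing (if $R$ is Gorenstein, push a socle element of $R_0$ into $(\delta)$ and pull back to $(f)$; conversely, show any socle element of $R$ lands in $(\delta)$). You instead split by degree and assemble the single identity $\dim_k\mathrm{Soc}(R)=\dim_k\mathrm{Soc}(R_0)+\dim_k\mathrm{Soc}(S_0)-1$, from which both directions drop out at once. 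Your route is a bit more structured and yields that socle-dimension formula as a byproduct (which is the connected-sum socle formula specialized to this setting); the paper's is shorter and avoids the degree-by-degree bookkeeping. One minor remark: in your key step you invoke (ii) to deduce $\a_1 x'=0$ and $\b_1 y'=0$ from $\m_R r=0$, but this already follows by choosing $a\in\a_1$ and $b\in\b_1$ independently (take $b=0$, then $a=0$); the computation $\a^2\cap\b^2=k\delta$ is only needed for the degree-two count, not here.
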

\begin{proof} 
Note that our assumptions imply $\delta \in \mathrm{Soc}(R)$. 
Assume that $R$ is Gorenstein. If $x' \in \mathrm{Soc}(R_0)$, then the image of $x'$ in $R$ must be in $(\delta)$, and therefore $x'\in (f)$, which shows that $R_0$ is also Gorenstein. The argument for $S_0$ is similar.

Now assume that $R_0$ and $S_0$ are Gorenstein.
Every element of $\m _R$ can be represented as $x'+y'$ with $x'\in \a$ and $y' \in b$. According to the convention in~(\ref{setup}), $x' \in \a$ corresponds to an element $\tilde{x} \in R_0$ and $y'\in \b$ corresponds to an element $\tilde{y}\in S_0$,  We have
$x'+y' \in \mathrm{Soc}(R) \Leftrightarrow \a x'=\b y'=0\Leftrightarrow \tilde{x} \in \mathrm{Soc}(R_0), \tilde{y} \in \mathrm{Soc}(S_0)$. Indeed, $x'+y' \in \mathrm{Soc}(R)$ implies that $ax'=-by' \in (\delta)$ for every choice of $a \in \a$ and $b \in \b$, and this can only happen if  $\a x'=\b y'=0$. Therefore, $\tilde{x}\in \mathrm{Soc}(R_0)= (f), \tilde{y}\in \mathrm{Soc}(S_0)= (g)$, which implies $x'+y' \in (\delta)$.
\end{proof}

From this point on, we will assume that $R$ is not Gorenstein. 

We will think of $R_1, S_1$, and choices of generators for their defining ideals as the data from which $R$ is constructed. 

\begin{construction}\label{constr2}
Given rings $R_1=P_1/(a_1, \ldots, a_t), S_1=P_2/(b_1, \ldots, b_u)$ with $m_{R_1}^3=\m_{S_1}^3=0$, we  let $I_1=\m_{P_1} a_1+(a_2, \ldots, a_t), I_2=\m_{P_2} b_1+(b_2, \ldots, b_u)$ and define $R$ to be the ring given by~(\ref{def}), with $s=1$, $f:=a_1, g:=b_1$.
\end{construction} Note that $\m_{P_1}a_1=0$ and $m_{P_2}b_1=0$ are redundant in the defining equations of $R$ (since they follow from $\m_{P_1}b_1=0$ and $\m_{P_2}a_1=0$). The same $R$ would be obtained by using $I_1=(a_2, \ldots, a_t)$, $I_2=(b_2, \ldots, b_u)$ in~(\ref{def}). However, the choice $I_1=\m_{P_1} a_1+(a_2, \ldots, a_t), I_2=\m_{P_2} b_1+(b_2, \ldots, b_u)$ guarantees that $R_0:=P_1/I_1$ and $S_0:=P_2/I_2$ satisfy $\m _{R_0}^3=\m_{S_0}^3=0$.

\begin{example}\label{exnew}
Let $$R_1=\frac{k[x_1, y_1, z_1]}{(x_1^2, y_1^2, z_1^2, x_1y_1)}, \ \ \ \ \ \ S_1=\frac{k[x_2, y_2, z_2]}{(x_2^2, y_2^2, z_2^2, x_2y_2)}.$$
We use the construction given in~(\ref{constr2}), using $f=z_1^2, g=z_2^2$.

The resulting ring is 
$$
R=\frac{k[x_1, y_1, z_1, x_2, y_2, z_2]}{(x_1, x_2, y_1, y_2)^2+z_1(x_2, y_2, z_2)+z_2(x_1, y_1, z_1)+(z_1^2-z_2^2)}
$$
\end{example}
\bigskip

\section{Main Results}
We study conditions on $R_1$ and $S_1$ that are necessary and sufficient for $R$ to admit minimal totally acyclic complexes.

More precisely, consider a sequence of maps
\begin{equation}\label{cx1}
\cdots \rightarrow R^{b_{i+1}}\buildrel{d_{i+1}}\over\rightarrow R^{b_i}\buildrel{d_i}\over\rightarrow R^{b_{i-1}}\buildrel{d_{i-1}}\over\rightarrow  \cdots
\end{equation}
and the induced sequences (recalling the notation from ~(\ref{st-up})):
\begin{equation}\label{cx2}
\cdots \rightarrow R_1^{b_{i+1}}\buildrel{A_{i+1}}\over\rightarrow R_1^{b_i}\buildrel{A_i}\over\rightarrow R_1^{b_{i-1}}\buildrel{A_{i-1}}\over\rightarrow \ldots   \ \ \ \ \ \mathrm{and} \ \ \ \ \ \ \ \cdots \rightarrow S_1^{b_{i+1}}\buildrel{B_{i+1}}\over\rightarrow S_1^{b_i}\buildrel{B_i}\over\rightarrow S_1^{b_{i-1}}\buildrel{B_{i-1}}\over\rightarrow \ldots 
\end{equation}
Conversely, given the maps in~(\ref{cx2}), we construct the maps in~(\ref{cx1}) by letting $d_i=A'_i + B'_i$ (where $A_i' , B_i': R^{b_i} \rightarrow R^{b_{i-1}}$ are obtained by lifting $A_i, B_i$ to $R_0^{b_i}$ and $S_0^{b_i}$ respectively, and then identifying $R_0, S_0$ with subrings of $R$).

We investigate the relationship between~(\ref{cx1}) being an exact complex and~(\ref{cx2}) being exact complexes.
\begin{observation}\label{iscomplex}
If~(\ref{cx1}) is a complex, then the two sequences in~(\ref{cx2}) are also complexes.
\end{observation}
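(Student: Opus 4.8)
The plan is to compute the composite $d_id_{i+1}$ directly from the decomposition $d_i=A_i'+B_i'$ supplied by Setup~\ref{st-up}, where $A_i'$ and $B_i'$ are the $b_{i-1}\times b_i$ matrices over $R$ with entries in $\a$ and $\b$ respectively. Expanding gives
\[
d_id_{i+1}=A_i'A_{i+1}'+A_i'B_{i+1}'+B_i'A_{i+1}'+B_i'B_{i+1}',
\]
and the two mixed terms vanish: each entry of $A_i'B_{i+1}'$ is a sum of products of an entry of $A_i'$ (lying in $\a$) with an entry of $B_{i+1}'$ (lying in $\b$), hence lies in $\a\b=(0)$, and similarly for $B_i'A_{i+1}'$. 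So the hypothesis that~(\ref{cx1}) is a complex amounts to the matrix identity $A_i'A_{i+1}'=-\,B_i'B_{i+1}'$ over $R$.

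Next I would pin down where the entries of this common matrix live. The entries of $A_i'A_{i+1}'$ lie in $\a^2$, and by the identity they also lie in $\b^2$, hence in $\a^2\cap\b^2\subseteq\a\cap\b=(\delta)$; symmetrically for $B_i'B_{i+1}'$. It then remains to transport this back along $\phi_1$ and $\phi_2$. Since $\a\subseteq\mathrm{im}(\phi_1)$, identifying $R_0$ with $\mathrm{im}(\phi_1)$ turns $A_i'$ into $\tilde A_i$, and as $\phi_1$ is a ring homomorphism the product $A_i'A_{i+1}'$ is identified with $\tilde A_i\tilde A_{i+1}$. Now $(\delta)=R\delta=k\delta$ is the socle line spanned by $\delta=\phi_1(f)$ (here $\delta\in\mathrm{Soc}(R)$ because $\delta\in[R]_2$ and $\m_R^3=0$), so injectivity of $\phi_1$ forces $\phi_1^{-1}\big((\delta)\big)=(f)R_0$. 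Hence every entry of $\tilde A_i\tilde A_{i+1}$ lies in $(f)R_0$, which is exactly the assertion that the induced map $A_iA_{i+1}\colon R_1^{b_{i+1}}\to R_1^{b_{i-1}}$ is zero. Replacing $\phi_1,f$ by $\phi_2,g$ gives $B_iB_{i+1}=0$ over $S_1$, so both sequences in~(\ref{cx2}) are complexes.

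The step needing the most care is not deep but bookkeeping: one must be certain that ``(\ref{cx1}) is a complex'' yields the genuine matrix equality $A_i'A_{i+1}'=-B_i'B_{i+1}'$ over $R$ (rather than only information about images of vectors with entries in $\a$), and that the chain of identifications $d_i\rightsquigarrow A_i'\rightsquigarrow\tilde A_i\rightsquigarrow A_i$ respects composition. Once this is in place, the two containments $\a^2\cap\b^2\subseteq(\delta)$ and $\phi_1^{-1}((\delta))=(f)R_0$ --- each immediate from $\m_R^3=0$, $\delta\in[R]_2$, and injectivity of $\phi_1$ --- close the argument.
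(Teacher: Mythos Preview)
Your proof is correct and follows essentially the same route as the paper: both compute $d_id_{i+1}=A_i'A_{i+1}'+B_i'B_{i+1}'$ (using $\a\b=0$ to kill the cross terms), deduce $A_i'A_{i+1}'=-B_i'B_{i+1}'$, conclude that the entries lie in $\a\cap\b=(\delta)$, and then translate this via $\phi_1,\phi_2$ to $A_iA_{i+1}=0$ and $B_iB_{i+1}=0$. Your version simply spells out in more detail the steps the paper compresses into two sentences.
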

\begin{proof}
We have $d_id_{i+1}=A_i'A'_{i+1}+B'_iB_{i+1}'$.
Assume $d_id_{i+1}=0$. Then $A_i'A_{i+1}'=-B_i'B_{i+1}'$ and therefore the images of both $A_i'A_{i+1}'$ and $B_i'B_{i+1}'$ are contained in $(\delta) R^{b_{i-1}}$. This is equivalent to $A_iA_{i+1}=B_iB_{i+1}=0$.
\end{proof}
 Note that the converse of Observation~(\ref{iscomplex}) is not true, since the images of $A_i'A_{i+1}'$ and $B_i'B_{i+1}'$ can be contained in $(\delta)$, but $A_i'A_{i+1}' \ne -B_i'B_{i+1}'$ (for instance replacing $B_i'$ by $-B_i'$ will cause this to occur).

There will be an additional assumption that we will impose in the course of this investigation, namely
\begin{equation}\label{condition}
(f)  R_0^{b_i}\subseteq \mathrm{im}(\tilde{A}_{i+1})\ \ \ \ \mathrm{and} \ \ \ \  (g) S_0^{b_i}\subseteq \mathrm{im}(\tilde{B}_{i+1})\ \ \ \ \mathrm{for\ all}\  i. \end{equation}

Before stating the results, we illustrate our conclusions using the following two examples:
\begin{example}\label{exnew2}
Consider the rings from Example~(\ref{exnew}). Note that $z_1$ is an exact zero divisor for $R_1$, $z_2$ is an exact zero divisor for $R_2$, and $z_1+z_2$ is an exact zero divisor for $R$.
Consider the following complexes in the roles of the complexes in ~(\ref{cx2})
$$\cdots \rightarrow R_1^{b_{i+1}}\buildrel{z_1}\over\rightarrow R_1^{b_i}\buildrel{z_1}\over\rightarrow R_1^{b_{i-1}}\buildrel{z_1}\over\rightarrow \ldots   \ \ \ \ \ \mathrm{and} \ \ \ \ \ \ \ \cdots \rightarrow S_1^{b_{i+1}}\buildrel{z_2}\over\rightarrow S_1^{b_i}\buildrel{-z_2}\over\rightarrow S_1^{b_{i-1}}\buildrel{z_2}\over\rightarrow \ldots 
$$
and we obtain
$$
\cdots \rightarrow R^{b_{i+1}}\buildrel{z_1+z_2}\over\rightarrow R^{b_i}\buildrel{z_1-z_2}\over\rightarrow R^{b_{i-1}}\buildrel{z_1+z_2}\over\rightarrow  \cdots
$$
in the role of~(\ref{cx1}) (which is a complex). Note that all these complexes are exact, and condition~(\ref{condition}) holds, where $\tilde{A_i}$ is given by multiplication by $z_1$ and $\tilde{B_i}$ is given by multiplication by $z_2$.
\end{example}

\begin{example}\label{ex1}
Consider
$$
R_1=\frac{k[x_1, x_2, y_1, y_2, y_3]}{(x_1, x_2)^2+(y_1, y_2, y_3)^2+x_1(y_1, y_2)}
$$
$$
 S_1=\frac{k[x_3, x_4, x_5, y_4, y_5]}{(x_3, x_4, x_5)^2+(y_4, y_5)^2+y_4(x_3, x_4)}
$$
Construct $R$ as in~(\ref{constr2}), using $f:=x_1y_1, g:=x_4y_4$.

Note that $R_1$ and $S_1$ have exact zero divisors. The following elements are a pair of exact zero divisors in $R_1$:
$$
l_1=x_1+x_2+y_1+y_2+y_3 \ \ \ \ \ l_1'= x_1+x_2-y_1-y_2-y_3
$$ 
and the following elements are a pair of exact zero divisors in $S_1$:
$$
l_2=x_3+x_4+x_5+y_4+y_5, \ \ \ \ \ l_2'=x_3+x_4+x_5-y_4-y_5
$$
(this has been checked using Macaulay 2).
Thus, the complexes
\begin{equation}\label{cc1}
\cdots R_1\buildrel{l_1'}\over\rightarrow R_1  \buildrel{l_1}\over\rightarrow R_1 \buildrel{l_1'}\over\rightarrow R_1 \buildrel{l_1}\over\rightarrow R_1\cdots 
\end{equation}
and 
\begin{equation}\label{cc2}
\cdots S_1 \buildrel{l_2'}\over\rightarrow S_1  \buildrel{l_2}\over\rightarrow S_1 \buildrel{l_2'}\over\rightarrow S_1 \buildrel{l_2}\over\rightarrow S_1\cdots 
\end{equation}
are exact. 

Note that $\tilde{l}_1\tilde{l'}_1=0$, and $\tilde{l}_2\tilde{l'}_2=0$,  so condition~(\ref{condition}) does not hold. In fact, more is true: for every choice of $l_1, l'_1 \in R_1$ and $l_2,  l'_2 \in R_2$ which are pairs of exact zero divisors, we will have $\tilde{l}_1\tilde{l'}_1=\tilde{l}_2\tilde{l}'_2=0$. To see this, write $l_1:=l_{1x}+l_{1y}$, where $l_{1x}$ is a linear combination of $x_1, x_2$, and $l_{1y}$ is a linear combination of $y_1, y_2, y_3$, and note that setting $l_1':=l_{1x}-l_{1y}$ gives $\tilde{l}_1\tilde{l}'_1=0$. Since the annihinlator of $l_1$ is a principal ideal, it follows that $l_1'$ is the generator of that annihilator. A similar argument applies to $l_2$. 

The complexes~(\ref{cc1}) and~(\ref{cc2}) can be used to build a complex of $R$-modules:
\begin{equation}\label{cc3}
\cdots \buildrel{l_1+l_2}\over\rightarrow R\buildrel{l_1'+l_2'}\over\rightarrow R \buildrel{l_1+l_2}\over\rightarrow \cdots 
\end{equation}
 However, this complex is not exact. In fact, $R$ does not have exact zero divisors. To see this, assume that  $L:=L_{\a}+L_{\b} \in R$ is an exact zero divisor, where $L_{\a}$ is a linear combination of $x_1, x_2, y_1, y_2, y_3$, and $L_{\b}$ is a linear combination of $x_3, x_4, x_5, y_4, y_5$. Note $L_{\a }$ and $L_{\b }$ must be nonzero ($x_1, x_2, y_1, y_2, y_3 \in \mathrm{ann}(L_{\b })$, and thus $L_{\b}$ cannot be an exact zero divisor).  Further, write $L_{\a }:=L_{\a x}+L_{\a y}$, where $L_{\a x}$ is a linear combination of $x_1, x_2$, and $L_{\a y}$ is a linear combination of $y_1, y_2, y_3$. Similarly, $L_{\b }:=L_{\b x } + L_{\b y}$. Note that $(x_1, \ldots, x_5)^2R=(y_1, \ldots, y_5)^2 R=0$, and therefore $(L_{\a x} + L_{a y})(L_{\a x} - L_{\a y})=0, (L_{\b x} +L_{\b y})(L_{\b x} - L_{\b y})=0$. Since we also have $$(x_1, x_2, y_1, y_2, y_3)(x_3, x_4, x_5, y_4, y_5)R=0, $$ it follows that $L_{\a x} - L_{\a y} , L_{\b x} - L_{\b y} \in \mathrm{ann}(L)$, thus $\mathrm{ann}(L)$ cannot be a principal ideal. 

We shall see in Lemma~(\ref{lemma_condition}) that the failure of~(\ref{cc3}) to be exact is due to the failure of condition~(\ref{condition}).
We shall see in Corollary~(\ref{thus}) that even though $R$ does not have exact zero divisors, it does have totally reflexive modules of higher rank.

\end{example}

Now we prove that condition~(\ref{condition})  is necessary for~(\ref{cx1}) to be totally acyclic.

\begin{lemma}\label{lemma_condition}
Assume that $R$ is not Gorenstein and~(\ref{cx1}) is a totally acyclic complex. Then (\ref{condition}) holds.
\end{lemma}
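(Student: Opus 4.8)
The plan is to argue by contradiction: suppose condition~(\ref{condition}) fails, say $(f)R_0^{b_i} \not\subseteq \mathrm{im}(\tilde A_{i+1})$ for some $i$, and produce an element of $\ker(d_i)$ that is not in $\mathrm{im}(d_{i+1})$, contradicting exactness of~(\ref{cx1}). The key point is the dictionary from Setup~(\ref{st-up}) that translates between maps over $R$ and pairs of maps over $R_1$, $S_1$ (lifted to $R_0$, $S_0$). Recall that $f$ and $g$ have the same image $\delta$ in $R$, and $\delta \in \mathrm{Soc}(R)$, so multiplication by $\delta$ kills $\m_R$; concretely, if $\tilde x \in R_0^{b_i}$ lifts to $x' \in \a^{b_i} \subseteq R^{b_i}$, then $fx'$ and the ``$g$-version'' of the same vector agree in $R^{b_i}$, and this common element is annihilated by every entry of $A'_i$ and $B'_i$. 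So a vector of the form $f x'$ (equivalently $-gy'$ for the corresponding $y'$) always lies in $\ker(d_i)$.

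First I would pin down the candidate cycle. Since the complexes~(\ref{cx2}) are exact (this is where I use that~(\ref{cx1}) is totally acyclic, hence in particular exact — though I should check whether exactness of~(\ref{cx2}) is already available or needs a separate short argument; Observation~(\ref{iscomplex}) only gives that they are complexes), I can try to locate where $(f)R_0^{b_i}$ can fail to be hit. Pick $\tilde x \in R_0^{b_i}$ with $f\tilde x \notin \mathrm{im}(\tilde A_{i+1})$. Lift to $x' \in R^{b_i}$; then $d_i(f x') = f A'_i(x') = 0$ because $f A'_i(x')$ has entries in $(\delta)\m_R = 0$ (entries of $A'_i$ are in $\a$, and $\a \cdot \delta \subseteq \a^3 = 0$ — here I use $\m_R^3 = 0$). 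Wait: I need $f x'$ itself, not $A'_i$ applied to it, to be a cycle; $d_i(f x') = f\, d_i(x')$ and $d_i(x') = A'_i(x') + B'_i(x')$, but $x'$ has entries in $\a$, so $B'_i(x')$ has entries in $\b\a = 0$, hence $d_i(x') = A'_i(x')$, and $f \cdot A'_i(x')$ has entries in $\a^2 \cdot \a = \a^3 = 0$. Good, so $f x' \in \ker(d_i)$.

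Next, suppose for contradiction $f x' = d_{i+1}(w)$ for some $w \in R^{b_{i+1}}$. Decompose $w = w_\a + w_\b + (\text{higher-degree part})$ using the grading; the element $f x'$ lives in $\a^2 R^{b_i} \cap$ degree-appropriate component, so I would extract the ``$\a$-component'' of the equation $d_{i+1}(w) = f x'$. Writing $d_{i+1} = A'_{i+1} + B'_{i+1}$ and tracking which pieces land in $\a$ versus $\b$, the component of $d_{i+1}(w)$ lying in the subring $R_0 = \mathrm{im}(\phi_1)$ is exactly $\tilde A_{i+1}$ applied to the $R_0$-part of $w$ (modulo the identification), together with possibly a $\delta$-term. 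This should force $f\tilde x \in \mathrm{im}(\tilde A_{i+1}) + (\text{something controllable})$; the delicate bookkeeping is separating the contribution of $\delta = f = g$ and making sure it does not give extra room. I expect this comparison-of-components step to be the main obstacle: one must carefully use that $\a \cap \b = (\delta)$, that $\delta$ is socle, and the precise lifting conventions of Setup~(\ref{st-up}) to conclude that the only way $f x'$ can be a boundary is if $f\tilde x$ is already in $\mathrm{im}(\tilde A_{i+1})$.

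Finally, once the contradiction is obtained on the $(f)$ side, the $(g)$ side follows by the symmetric argument interchanging the roles of $R_0, \a, f, A$ with $S_0, \b, g, B$. I would also double-check the edge case where the failure index interacts with exactness of the dual complex $\mathcal{F}^*_\cdot$ rather than $\mathcal{F}_\cdot$ itself — since ``totally acyclic'' gives exactness of both, and dualizing swaps $d_{i+1}$ with $d_i^*$, it is conceivable the clean statement needs one half of the argument applied to $\mathcal{F}_\cdot$ and is automatic, or needs both halves; in any case total acyclicity is exactly the hypothesis that makes enough exactness available, so no new input is needed beyond careful use of Theorem~\ref{Yoshino}(c) (the maps have linear entries, $b_i = b_{i-1}$) to keep the degree bookkeeping honest.
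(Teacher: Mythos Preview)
Your worry about the ``$\delta$-term giving extra room'' is exactly where the argument breaks, and it is fatal to the direct contradiction you outline. You correctly show $\delta x' \in \ker(d_i)=\mathrm{im}(d_{i+1})$, so write $\delta x' = d_{i+1}(w)$ with $w=w'_{\a}+w'_{\b}$ of degree one. Since $\a\b=0$ this reads $\delta x' = A'_{i+1}(w'_{\a})+B'_{i+1}(w'_{\b})$ with $A'_{i+1}(w'_{\a})\in[\a^2]_2^b$ and $B'_{i+1}(w'_{\b})\in[\b^2]_2^b$. Using $[\a^2]_2\cap[\b^2]_2=k\delta$ componentwise, you only get $A'_{i+1}(w'_{\a})=\delta(x'+\mu)$ for some degree-zero vector $\mu$, i.e.\ $f(\tilde x+\tilde\mu)\in\mathrm{im}(\tilde A_{i+1})$. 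There is no control over $\tilde\mu$ (different choices of $w$ change it), so you cannot conclude $f\tilde x\in\mathrm{im}(\tilde A_{i+1})$, and the contradiction does not close. Passing to the dual complex does not help: it gives the analogous statement for transposes, with the same ambiguity.

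The paper proceeds quite differently. The key new ingredient is an injectivity statement: for every $i$, the restriction of $A'_i$ to $[\a]_1^{\,b}$ is injective (and likewise $B'_i$ on $[\b]_1^{\,b}$). This is proved by a growth argument: if $A'_i{\bf u}=0$ with ${\bf u}\in[\a]_1^{\,b}$ nonzero, then ${\bf u}=d_{i+1}({\bf e})$ for some degree-zero ${\bf e}$; but then $y_j{\bf e}\in\ker(d_{i+1})$ for all $j$, and iterating produces at least $nm,\,nm^2,\ldots$ minimal generators of successive kernels, contradicting the constancy of Betti numbers from Theorem~\ref{Yoshino}. Once this injectivity is in hand, a dimension count finishes: every linear element of $\ker(d_i)$ has the form ${\bf u}'+{\bf v}'$ with $A'_i{\bf u}'=-B'_i{\bf v}'\in(\delta)R^b$, and by injectivity the assignment $\delta{\bf e}\mapsto {\bf u}'+{\bf v}'$ is a bijection from $(\delta)R^b\cap\mathrm{im}(A'_i)\cap\mathrm{im}(B'_i)$ to $[\ker(d_i)]_1$. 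Since the latter has dimension $b$ and $(\delta)R^b$ has dimension $b$, this forces $(\delta)R^b\subseteq\mathrm{im}(A'_i)\cap\mathrm{im}(B'_i)$, which is condition~(\ref{condition}). Your socle-cycle idea, by itself, cannot supply the missing injectivity.
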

\begin{proof}
We know from Theorem~(\ref{Yoshino}) that the betti numbers in a totally acyclic complex are constant, say $b_i=b$, and the entries in the matrices representing the maps $d_i$ are linear.

Let ${\bf u} \in R^b$ be a nonzero vector with linear entries belonging to $\mathfrak{a}$. We claim that $A_i' {\bf u} \ne 0$ for all $i$. In other words, the restriction of  $A_i'$ to the degree one component of $\a ^b$ is injective. Otherwise, we would have ${\bf u} \in \mathrm{ker}(d_i)=\mathrm{im}(d_{i+1})$. Say ${\bf u} = d_{i+1}({\bf e})$, where ${\bf e}\in R^b$ has degree 0. Since $\mathfrak{a}\mathfrak{b}=0$, we have 
$y_1{\bf e}, \ldots y_m{\bf e} \in \mathrm{ker}(d_{i+1})=\mathrm{im}(d_{i+2})$. Say $y_j{\bf e}=d_{i+2}({\bf f_j})$, where ${\bf f_j}\in R^b$ has degree zero. Then $x_l {\bf f_j } \in \mathrm{ker}(d_{i+2})$ for all $1 \le l \le n, 1 \le j \le m$. This shows that $\mathrm{ker}(d_{i+1})$ has at least $m$ minimal generators, and $\mathrm{ker}(d_{i+2})$ has at least $nm$ minimal generators. Continuing along the same lines, we see that $\mathrm{ker}(d_{i+3})$ will have at least $nm^2$ minimal generators, etc. This contradicts the fact that the betti numbers are constant.

Similarly, if ${\bf v} \in R^b$ is a nonzero vector with linear entries belonging to $\mathfrak{b}$, we have $B_i'{\bf v} \ne 0$.

A nonzero vector with linear entries in $R^b$ can be written as ${\bf u}' + {\bf v}'$, where ${\bf u}'$ has entries in $\a$ and ${\bf v}'$ has entries in $\b$. We have 
${\bf u}' +{\bf v}'\in \mathrm{ker}(d_i)$ if and only if $A_i'{\bf u}' =-B_i'{\bf v}'$, and this  is a nonzero vector in $(\delta ) R^b$.
Due to the injectivity of $A_i'$ and $B_i'$, for every $\delta {\bf e} \in (\delta )R^b \cap \mathrm{im}(A_i')\cap \mathrm{im}(B_i')$, there exist unique ${\bf u}', {\bf v}'$ with $A_i'{\bf u}'=-B_i'{\bf v}' =\delta {\bf e} $, and therefore a unique ${\bf u}'+{\bf v}' \in \mathrm{ker}(d_i)$. Since the $\mathrm{ker}(d_i)$ is generated by $b$ linearly independent vectors with linear entries, it follows that $(\delta) R^b \subseteq \mathrm{im}(A_i') \cap \mathrm{im}(B_i')$, which  is equivalent to the desired conclusion.

\end{proof}

\begin{theorem}\label{mainresult}
 Assume that~(\ref{cx1}) is a complex and condition ~(\ref{condition}) holds.
Then (\ref{cx1}) is exact if and only if both of the complexes in (\ref{cx2}) are exact.
\end{theorem}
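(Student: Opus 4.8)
The plan is to chase elements through the decomposition $R^b \ni \mathbf{w} = \mathbf{u}' + \mathbf{v}'$ (entries in $\mathfrak a$, resp.\ $\mathfrak b$) together with the possible socle contribution $\delta\mathbf{e}$, using that $\mathfrak a\mathfrak b = 0$ and $\mathfrak a\cap\mathfrak b = (\delta)$. First I would set up the bookkeeping: a general element $\mathbf{w}\in R^b$ of positive degree is $\mathbf{u}' + \mathbf{v}' + \delta\mathbf{e}$, where by Setup~\ref{st-up} the pieces correspond to $\tilde{\mathbf u}\in R_0^b$, $\tilde{\mathbf v}\in S_0^b$, with further reductions $\mathbf u\in R_1^b$, $\mathbf v\in S_1^b$; and $d_i(\mathbf w) = A_i'\mathbf u' + B_i'\mathbf v'$ since $\delta\mathbf e$ is killed. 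The identity $d_i(\mathbf w)=0$ in $R$ translates, via the argument in Observation~\ref{iscomplex}, to: $A_i'\mathbf u'$ and $B_i'\mathbf v'$ both lie in $(\delta)R^{b_{i-1}}$ and are negatives of each other there; equivalently $A_i\mathbf u = 0$ in $R_1^{b_{i-1}}$, $B_i\mathbf v = 0$ in $S_1^{b_{i-1}}$, \emph{plus} the matching condition $\tilde A_i\tilde{\mathbf u} = -\tilde B_i\tilde{\mathbf v}$ in the copy of $(\delta)$.

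For the forward direction, assume~(\ref{cx1}) is exact; I want to show each sequence in~(\ref{cx2}) is exact, say the first. Take $\mathbf u\in R_1^{b_i}$ with $A_i\mathbf u = 0$; lift to $\tilde{\mathbf u}\in R_0^{b_i}$, so $\tilde A_i\tilde{\mathbf u}\in (f)R_0^{b_{i-1}}$, i.e.\ $\tilde A_i\tilde{\mathbf u} = f\mathbf c$ for some degree-zero $\mathbf c$. By condition~(\ref{condition}) applied at level $i$, $(f)R_0^{b_i}\subseteq\operatorname{im}(\tilde A_{i+1})$, so I can write... wait—more carefully: I want to realize $\tilde{\mathbf u}$ as $\tilde A_{i+1}$ of something modulo $f$. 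The clean route is to pass to $R$: form $\mathbf u' \in R^{b_i}$ (entries in $\mathfrak a$) lifting $\tilde{\mathbf u}$, and note $d_i(\mathbf u') = A_i'\mathbf u' \in (\delta)R^{b_{i-1}}$. If $A_i'\mathbf u' = 0$, exactness of~(\ref{cx1}) gives $\mathbf u' = d_{i+1}(\mathbf w)$; if $A_i'\mathbf u' = \delta\mathbf c \neq 0$, then by condition~(\ref{condition}) (which guarantees $\delta R^{b_{i-1}}\subseteq\operatorname{im}(A_i')\cap\operatorname{im}(B_i')$ as in Lemma~\ref{lemma_condition}'s proof, so in particular $\delta\mathbf c = -B_i'\mathbf v'$ for some $\mathbf v'$) the element $\mathbf u' + \mathbf v'\in\ker(d_i) = \operatorname{im}(d_{i+1})$; writing $\mathbf u'+\mathbf v' = d_{i+1}(\mathbf w) = A_{i+1}'\mathbf w_{\mathfrak a} + B_{i+1}'\mathbf w_{\mathfrak b}$ and comparing the $\mathfrak a$-parts yields $\mathbf u' \equiv A_{i+1}'\mathbf w_{\mathfrak a}$ up to an element of $(\delta)$, hence reducing into $R_1$ gives $\mathbf u = A_{i+1}\mathbf w_{\mathfrak a}$ in $R_1^{b_i}$. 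This proves $\ker(A_i)\subseteq\operatorname{im}(A_{i+1})$; the reverse inclusion is Observation~\ref{iscomplex}. The argument for $B$ is symmetric.

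For the converse, assume both sequences in~(\ref{cx2}) are exact; I want~(\ref{cx1}) exact. Take $\mathbf w = \mathbf u' + \mathbf v' + \delta\mathbf e\in\ker(d_i)$. As above this forces $A_i\mathbf u = 0$ and $B_i\mathbf v = 0$, so by exactness of~(\ref{cx2}) there are $\mathbf p\in R_1^{b_{i+1}}$, $\mathbf q\in S_1^{b_{i+1}}$ with $A_{i+1}\mathbf p = \mathbf u$, $B_{i+1}\mathbf q = \mathbf v$. Lifting and moving to $R$, $\mathbf w - d_{i+1}(\mathbf p' + \mathbf q')$ has all entries in $(\delta)R^{b_i}$, i.e.\ equals some $\delta\mathbf e'$; so it suffices to show $(\delta)R^{b_i}\subseteq\operatorname{im}(d_{i+1})$. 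But $\delta\mathbf e' = f\mathbf e' \in (f)R_0^{b_i}$ under the identification, and condition~(\ref{condition}) gives $(f)R_0^{b_i}\subseteq\operatorname{im}(\tilde A_{i+1})$, so $\delta\mathbf e' = \tilde A_{i+1}(\tilde{\mathbf r}) = d_{i+1}(\mathbf r')$ for the corresponding $\mathbf r'\in R^{b_{i+1}}$ with entries in $\mathfrak a$ — here I use that $\tilde A_{i+1}$, being a lift that kills $f$, sends the relevant preimage exactly to $\delta\mathbf e'$ in $R$. Hence $\mathbf w\in\operatorname{im}(d_{i+1})$.

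The main obstacle I anticipate is the careful handling of the $(\delta)$-ambiguity: in passing between $R$, $R_0$, $R_1$ one repeatedly loses or must recover information modulo $(\delta)$ (equivalently modulo $(f)$ on the $R_0$-side), and the whole point of condition~(\ref{condition}) is to let that recovered error be absorbed into $\operatorname{im}(d_{i+1})$. I would want to state once, as a preliminary remark, the precise dictionary — that $\mathbf w\in\ker(d_i)$ iff ($A_i\mathbf u = 0$, $B_i\mathbf v = 0$, and $\tilde A_i\tilde{\mathbf u} = -\tilde B_i\tilde{\mathbf v}$ in $(\delta)R^{b_{i-1}}$) — and then both inclusions become bookkeeping with that dictionary plus condition~(\ref{condition}). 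A secondary point to check is that condition~(\ref{condition}) as stated (surjectivity onto $(f)R_0^{b_i}$) is exactly what is needed and is used at the correct index in each direction.
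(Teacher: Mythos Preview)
Your proposal is correct and follows essentially the same approach as the paper's proof: in both directions you decompose elements into $\mathfrak a$- and $\mathfrak b$-parts, translate $\ker(d_i)$ into the conditions $A_i\mathbf u=0$, $B_i\mathbf v=0$ plus a matching in $(\delta)$, and use condition~(\ref{condition}) (equivalently $(\delta)R^{b_i}\subseteq\operatorname{im}(A_{i+1}')\cap\operatorname{im}(B_{i+1}')$) to absorb the $(\delta)$-ambiguity. The only cosmetic differences are that you carry an explicit $\delta\mathbf e$ summand and, in the converse, absorb the residual $\delta\mathbf e'$ using only the $A$-half of~(\ref{condition}), whereas the paper corrects $x'$ and $y'$ separately; also your case split $A_i'\mathbf u'=0$ versus $\ne 0$ is unnecessary since $\mathbf v'=0$ covers the former.
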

\begin{proof}
We know from Observation~(\ref{iscomplex}) that the sequences in~(\ref{cx2}) are complexes.

 The hypothesis (\ref{condition}) is equivalent to $(\delta)R^{b_{i-1}}\subseteq \mathrm{im}(A'_i) \cap \mathrm{im}(B'_i)$ for all $i$.
Assume (\ref{cx1}) is exact. We show that the first complex in (\ref{cx2}) is exact. Consider $x \in \mathrm{ker}(A_i)$. We lift $x$ to an element $\tilde{x} \in R_0^{b_i}$, such that $\tilde{A_i}\tilde{x} \in (f) R_0^{b_{i-1}}$. This corresponds to an element $x' \in R^{b_i}$ such that all components of $x'$ are in $\a$, and $A_i'x' \in (\delta )R^{b_{i-1}}\subseteq \mathrm{im}(B'_i)$ by assumption~(\ref{condition}).  Therefore, there exists $y' \in R^{b_i}$ with all components in $\b $ such that $A'_ix'=B'_iy'$. This implies $x'-y' \in \mathrm{ker}(d_i)$. Since (\ref{cx1}) is exact, there exist $x_2'\in R^{b_{i+1}}$ with entries in $\a$ and $y_2' \in R^{b_{i+1}}$ with entries in $\b$ such that 
$$
x' - y' = d_{i+1}(x_2' + y_2')= A_{i+1}'x_2' + B'_{i+1} y_2'
$$
We have $x'-A_{i+1}'x_2' = B'_{i+1}y_2'-y' \in (\delta ) R^{b_i}$. Translating to elements of $R_0^{b_i}$, we have  $\tilde{x}-\tilde{A}_{i+1}\tilde{x}_2 \in (f)R_0^{b_i}$, and therefore $x=A_{i+1}(x_2)$, which is the desired conclusion.

Now assume that the complexes of (\ref{cx2}) are exact. Consider an element $x'+y' \in \mathrm{ker}(d_i)$, where $x'$ has all components in $\a$ and $y'$ has all components in $\b$. We have $d_i(x'+y')=A_i' x'+ B_i'y'$, and therefore $A_i' x'=-B_i' y' \in (\delta) R^{b_0}$.
Translating to elements of $R_0, S_0$, we have $\tilde{A}_i( \tilde{x}) \in (f) R_0^{b_{i-1}}, \tilde{B_i}(\tilde{y}) \in (g)S_0^{b_{i-1}}$, i.e. $x \in \mathrm{ker}(A_i)$ and $y \in \mathrm{ker}(B_i)$. The assumption that the complexes of (\ref{cx2}) are exact implies that there are elements $x_2 \in R_1^{b_{i+1}}, y_2 \in S_1^{b_{i+1}}$ such that 
$x=A_{i+1}(x_2)$ and $y=B_{i+1}(y_2)$. We can lift to elements $\tilde{x}_2\in R_0^{b_{i+1}}, \tilde{y}_2\in S_0^{b_{i+1}}$ such that 
$$
\tilde{x}=\tilde{A}_{i+1}(\tilde{x_2}) \ \ \mathrm{mod} ((f ) R_0^{b_i}), \ \ \ \tilde{y}=\tilde{B}_{i+1}(\tilde{y_2}) \ \ \mathrm{mod} ((g) S_0 ^{b_i})
$$
The assumption (\ref{condition}) allows us to conclude that $\tilde{x} \in \mathrm{im}(\tilde{A}_{i+1}), \tilde{y} \in \mathrm{im}(\tilde{B}_{i+1})$, which translates into $x' \in \mathrm{im}(A'_{i+1}), y' \in \mathrm{im}(B'_{i+1})$, and therefore $x'+y' \in \mathrm{im}(d_{i+1})$.
\end{proof}

The next result allows us to restate condition~(\ref{condition}):
\begin{proposition}\label{this_cond}
Let $R_1=P_1/I_1+(f)$ be a non-Gorenstein quotient of a polynomial ring $P_1$. Assume that $\m _{R_1}^3=0$ and $f$ has degree 2. Assume that there is a minimal totally acyclic complex 
$$
\cdots \rightarrow R_1^b \buildrel{A_{i+1}}\over\rightarrow R_1^b \buildrel{A_i}\over\rightarrow R_1^b \buildrel{A_{i-1}}\over\rightarrow \cdots 
$$
 and let $\tilde{A}_i: R_0^b \rightarrow R_0^b$ be liftings of the maps $A_i$ to $R_0:=P_1/(I_1+\m _{P_1} f)$.

We have
$$
(f) R_0^b \subseteq \mathrm{im}(\tilde{A}_{i-1}) \Leftrightarrow \mathrm{im}(\tilde{A}_{i-1}\tilde{A}_i )=(f)R_0^b
$$
If the above conditions hold, we can construct a minimal totally acyclic complex 
$$
\cdots \rightarrow R_1^b \buildrel{A'_{i+1}}\over\rightarrow R_1^b \buildrel{A'_i}\over\rightarrow R_1^b \buildrel{A'_{i-1}}\over\rightarrow \cdots 
$$
over $R_1$ such that $\tilde{A}'_{i-1}\tilde{A}'_i =fI_b$, where $I_b$ is the identity map on $R_0^b$.
\end{proposition}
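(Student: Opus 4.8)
The plan is to work throughout with homogeneous components, exploiting that $f$ has degree $2$ and $\m_{R_0}^3=0$. First I would record three facts. (i) Since $f\,\m_{R_0}=0$, the ideal $(f)R_0=fk$ is concentrated in degree $2$; assuming $f\neq 0$ in $R_0$ (otherwise $(f)R_0^b=0$ and the statement is vacuous), the map $v\mapsto fv$ is an isomorphism $k^b\to (f)R_0^b$. (ii) By minimality and exactness of the given complex over $R_1$, every element of $\mathrm{im}(A_j)=\mathrm{ker}(A_{j-1})$ has positive degree, and a degree-$0$ element of $R_1^b$ lifts uniquely to $R_0^b$ (as $(f)R_0$ lives in degree $2$). (iii) Since $A_{j-1}A_j=0$ over $R_1=R_0/(f)$, the matrix $\tilde{A}_{j-1}\tilde{A}_j$ computed over $R_0$ has all entries in $(f)R_0=fk$, so $\tilde{A}_{j-1}\tilde{A}_j=fC_j$ for a scalar matrix $C_j\in M_b(k)$; in particular $\mathrm{im}(\tilde{A}_{j-1}\tilde{A}_j)\subseteq (f)R_0^b$ always.

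With these in hand the equivalence is quick. The direction ``$\Leftarrow$'' is immediate: $(f)R_0^b=\mathrm{im}(\tilde{A}_{i-1}\tilde{A}_i)\subseteq\mathrm{im}(\tilde{A}_{i-1})$. For ``$\Rightarrow$'', assume $(f)R_0^b\subseteq\mathrm{im}(\tilde{A}_{i-1})$; since the reverse containment $\mathrm{im}(\tilde{A}_{i-1}\tilde{A}_i)\subseteq(f)R_0^b$ is automatic, it suffices to hit each generator $fe_j$. Write $fe_j=\tilde{A}_{i-1}(w)$. The degree $\geq 2$ part of $w$ maps into degree $\geq 3$, hence to $0$; and the degree-$0$ part $w_0$ satisfies $\tilde{A}_{i-1}(w_0)=0$ by degree, so $\bar{w}_0\in\mathrm{ker}(A_{i-1})$ is a degree-$0$ element, hence $0$, forcing $w_0\in fR_0^b$ and thus $w_0=0$. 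So we may take $w$ homogeneous of degree $1$. Reducing mod $f$ gives $A_{i-1}(\bar{w})=0$, so by exactness over $R_1$ we have $\bar{w}=A_i(\bar{u})$ with $\bar{u}$ of degree $0$ (higher-degree parts of any preimage land in degree $\geq 2$ and may be dropped). Lifting $\bar{u}$ to $u\in k^b$, the element $\tilde{A}_i(u)-w$ reduces to $0$ mod $f$ and has degree $1$, so it lies in $fR_0^b\cap[R_0^b]_1=0$; hence $\tilde{A}_i(u)=w$ and $\tilde{A}_{i-1}\tilde{A}_i(u)=fe_j$.

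For the final assertion, suppose the equivalent conditions hold for every $i$. Then $\mathrm{im}(fC_i)=(f)R_0^b$ for all $i$, and composing with $k^b\cong(f)R_0^b$ shows $C_i$ is surjective, hence $C_i\in GL_b(k)$. Define scalar automorphisms $\phi_i\in GL_b(k)$ by $\phi_0=\phi_1=I_b$ and the recursion $\phi_{i-2}=C_i\phi_i$; this determines $\phi_i$ uniquely for all $i$ (running separately along even and odd indices, using invertibility of the $C_i$ to go upward), with no consistency obstruction. Set $A'_i:=\phi_{i-1}^{-1}A_i\phi_i$. The maps $\phi_i$ give an isomorphism from the new sequence onto the original totally acyclic complex, so the new sequence is again minimal totally acyclic over $R_1$ (minimality survives because the $\phi_i$ are scalar, so the $A'_i$ stay linear). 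Since the $\phi_i$ are scalar, the unique lift of $A'_i$ annihilating $f$ is $\tilde{A}'_i=\phi_{i-1}^{-1}\tilde{A}_i\phi_i$, so
$$
\tilde{A}'_{i-1}\tilde{A}'_i=\phi_{i-2}^{-1}\bigl(\tilde{A}_{i-1}\tilde{A}_i\bigr)\phi_i=\phi_{i-2}^{-1}(fC_i)\phi_i=f\,\phi_{i-2}^{-1}C_i\phi_i=fI_b.
$$

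The hard part will be the forward direction of the equivalence: one must produce a preimage of $fe_j$ under $\tilde{A}_{i-1}\tilde{A}_i$ \emph{on the nose}, not merely modulo further multiples of $f$ (which a naive diagram chase leaves behind), and that is exactly what forces the degree bookkeeping above — choosing $w$ in degree $1$, $u$ in degree $0$, and invoking $fR_0^b\cap[R_0^b]_1=0$ together with uniqueness of degree-$0$ lifts to $R_0$. Everything afterward — recognizing $\tilde{A}_{j-1}\tilde{A}_j$ as $f$ times a scalar matrix, deducing invertibility of $C_j$ from the image condition, and solving the two-step recursion for the $\phi_i$ — is routine once $\m_{R_0}^3=0$ has collapsed $(f)R_0$ to a line.
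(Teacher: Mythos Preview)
Your proof is correct. The final construction (conjugating by invertible scalar matrices determined recursively) is essentially the paper's argument, just packaged more cleanly as a single family $\phi_i$ giving an isomorphism of complexes rather than two families $V_i,W_i$.

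The forward implication, however, is argued differently. The paper proceeds by a dimension count: from Yoshino's theorem one has $\dim_k[R_1]_2=\dim_k[R_1]_1-1$, hence $\dim_k[R_0]_2=\dim_k[R_0]_1$; the hypothesis makes the linear map $L_1:[R_0]_1^b\to[R_0]_2^b$ induced by $\tilde A_{i-1}$ surjective, hence bijective, and composing with the injective $L_0:k^b\to[R_0]_1^b$ (columns of $\tilde A_i$) gives an injective, hence bijective, map $k^b\to(f)R_0^b$. You instead do a direct diagram chase: pull an arbitrary preimage $w$ of $fe_j$ under $\tilde A_{i-1}$ down to degree~$1$ by degree considerations, use exactness over $R_1$ to write $\bar w=A_i(\bar u)$ with $\bar u$ in degree~$0$, and then observe that the lift satisfies $\tilde A_i(u)=w$ on the nose because $(f)R_0^b$ sits entirely in degree~$2$. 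Your route avoids the Hilbert-function input from Yoshino's theorem (though, like the paper, it still uses that the entries of the $A_i$ are linear---you should make this explicit, as it is what justifies ``$\tilde A_{i-1}(w_0)=0$ by degree'' and ``$\tilde A_i(u)-w$ has degree~$1$''). The paper's argument, on the other hand, yields the stronger intermediate fact that $\tilde A_{i-1}$ is bijective on degree~$1$, which is of independent interest.
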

\begin{proof}
$(\Leftarrow )$ is obvious. We prove $(\Rightarrow)$. Recall that the matrices $A_i$ have linear entries and every homogeneous element of degree two of $R_1^b$ is in  $\mathrm{ker}(A_{i-2})=\mathrm{im}(A_{i-1})$.  The assumption that $(f) R_0^b \subseteq \mathrm{im}(\tilde{A}_{i-1})$ implies that 
every homogeneous element of degree two of $R_0^b$ is in $\mathrm{im}(\tilde{A}_{i-1})$.

From Theorem~(\ref{Yoshino}), we have $\mathrm{dim}_k([R_1]_2)=\mathrm{dim}_k([R_1]_1)-1$, and therefore $\mathrm{dim}_k([R_0]_2)=\mathrm{dim}_k([R_0]_1)$.

Consider the map of  $k$-vector spaces $L_1: ([R_0]_1)^b \rightarrow ([ R_0]_2)^b$ induced by $\tilde{A}_{i-1}$. We know that this map is surjective, and therefore also  injective. We also have a $k$-linear map $L_0:([R_0]_0)^b \rightarrow ([R_0]_1)^b$ which sends the standard basis vectors to the columns of $\tilde{A}_i$. $L_0$ is also injective, and therefore the composition $L_1L_0: ([R_0]_0)^b \rightarrow ([R_0]_2)^b$ is injective. 
Note that $\mathrm{im}(\tilde{A}_{i-1}\tilde{A}_i ) = \mathrm{im}(L_1L_0)$, and it is contained in $(f) R_0^b$ (since $A_{i-1}A_i =0$). 
Viewing $L_1L_0$ as a map $:[R_0]_0^b \rightarrow (f)R_0^b$, we see that this map is surjective,  because the domain and codomain have the same dimension as vector spaces over $k$.

To prove the last statement, note that we have $\tilde{A}_i\tilde{A}_{i+1}=fU_i$ where $U_i : R_0^b \rightarrow R_0^b$ are invertible. We define
$\tilde{A}'_i:=V_i \tilde{A}_i W_i$ where $V_i, W_i: R_0^b \rightarrow R_0^b$ are invertible. For $i=0$, we let $V_0, W_0=I_b$. For $i>0$, we define $V_i, W_i$ recursively as follows: $V_{i+1}:=W_i^{-1}, W_{i+1}:=(V_iU_i)^{-1}$. For $i<0$, say $i=-j$, we define $V_{-j}, W_{-j}$ recursively as follows: $V_{-j-1}:=(U_{-j}W_{-j})^{-1}, W_{-j-1}:=V_{-j}^{-1}$. We now have $\tilde{A}'_i \tilde{A}'_{i+1}=fI_b$ for all $i$. The complex with the maps $A'_i$ (where $A'_i: R_1 ^b \rightarrow R_1^b$ is obtained from $\tilde{A}_i$ by modding out $f$) is still totally acyclic because the operations involved in constructing $A'_i$ from $A_i$ do not change the dimensions of the kernel and the image.

\end{proof}


\begin{corollary}\label{conclusion}
Let $R_1, S_1$ be non-Gorenstein rings with $\m _{R_1}^3=\m_{S_1}^3=0$, and let $f, g$ be part of minimal systems of generators for the defining ideals of $R_1$, respectively $S_1$. Let $R$ be constructed as in~(\ref{constr2}). Assume that $R$ is not Gorenstein.

Then $R$ has minimal totally acyclic complexes  if and only if both $R_1$ and $S_1$  have minimal totally acylclic complexes such that conditions (\ref{condition}) are satisfied. 
\end{corollary}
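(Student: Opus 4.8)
The plan is to obtain each direction by combining results already in place, with essentially all the work in the ``if'' direction.

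\emph{The ``only if'' direction.} Suppose $R$ admits a non-trivial minimal totally acyclic complex $\mathcal{F}$ of the form~(\ref{cx1}). By Theorem~(\ref{Yoshino}) its Betti numbers are constant, say equal to some $b\ge 1$, with linear differentials, so the induced sequences~(\ref{cx2}) are defined; they are complexes by Observation~(\ref{iscomplex}) and satisfy~(\ref{condition}) by Lemma~(\ref{lemma_condition}), whence Theorem~(\ref{mainresult}) makes both of them exact. To promote exactness to total acyclicity I would run the same argument on the dual complex $\mathcal{F}^{*}$, which is again a non-trivial minimal totally acyclic complex over the non-Gorenstein ring $R$: writing $d_i=A_i'+B_i'$ with the entries of $A_i'$ in $\a$ and those of $B_i'$ in $\b$, the transpose $d_i^{*}=(A_i')^{T}+(B_i')^{T}$ has the same support pattern, so by the dictionary of Setup~(\ref{st-up}) the complexes induced by $\mathcal{F}^{*}$ on $R_1$ and $S_1$ are exactly the $R_1$-dual and $S_1$-dual of the two complexes in~(\ref{cx2}); Lemma~(\ref{lemma_condition}) and Theorem~(\ref{mainresult}) applied to $\mathcal{F}^{*}$ then force those duals to be exact. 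Hence the complexes~(\ref{cx2}) are minimal totally acyclic complexes over $R_1$ and $S_1$ satisfying~(\ref{condition}).

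\emph{The ``if'' direction.} Suppose $R_1$ and $S_1$ carry minimal totally acyclic complexes $\mathcal{A}$ and $\mathcal{B}$ satisfying~(\ref{condition}), of constant Betti numbers $p$ and $q$. I would first replace them by $\mathcal{A}^{\oplus q}$ and $\mathcal{B}^{\oplus p}$, which leaves minimality, total acyclicity and~(\ref{condition}) intact and arranges a common Betti number $b$. Next I would invoke Proposition~(\ref{this_cond}) to normalize each so that the liftings satisfy $\tilde{A}_{i-1}\tilde{A}_i=f\,I_b$ and $\tilde{B}_{i-1}\tilde{B}_i=g\,I_b$ for all $i$; the normalized complexes remain minimal and totally acyclic, and now satisfy~(\ref{condition}) automatically since $\mathrm{im}(\tilde{A}_{i-1}\tilde{A}_i)=(f)R_0^{b}$. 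The decisive manoeuvre is then to insert alternating signs: replacing $B_i$ by $C_i:=(-1)^{i}B_i$ gives an isomorphic, hence equally good, complex $\mathcal{C}$ with $\tilde{C}_{i-1}\tilde{C}_i=-g\,I_b$. Setting $d_i:=A_i'+C_i'$ as in Setup~(\ref{st-up}), and using that the images of $f$ and $g$ in $R$ both equal the generator $\delta$ of $\a\cap\b$, one computes $A_i'A_{i+1}'=\delta I_b$ and $C_i'C_{i+1}'=-\delta I_b$, so $d_id_{i+1}=0$: thus~(\ref{cx1}) is a complex, it induces $\mathcal{A}$ and $\mathcal{C}$ on $R_1$ and $S_1$, and therefore it satisfies~(\ref{condition}); Theorem~(\ref{mainresult}) then gives that~(\ref{cx1}) is exact. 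For total acyclicity I would once more pass to the dual: $\mathcal{F}^{*}$ induces on $R_1$ and $S_1$ the duals $\mathcal{A}^{*}$ and $\mathcal{C}^{*}$, which are exact since $\mathcal{A}$ and $\mathcal{C}$ are totally acyclic, and these duals again satisfy~(\ref{condition}) because transposing $\tilde{A}_{i-1}\tilde{A}_i=fI_b$ yields $\tilde{A}_i^{T}\tilde{A}_{i-1}^{T}=fI_b$, so that $(f)R_0^{b}=\mathrm{im}(\tilde{A}_i^{T}\tilde{A}_{i-1}^{T})\subseteq\mathrm{im}(\tilde{A}_i^{T})$, and symmetrically for $\mathcal{C}$; Theorem~(\ref{mainresult}) applied to $\mathcal{F}^{*}$ then shows $\mathcal{F}^{*}$ exact. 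With $b\ge 1$ and the $d_i$ having entries in $\m_R$, this exhibits a non-trivial minimal totally acyclic complex over $R$, as required.

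\emph{Where I expect the difficulty.} The ``only if'' direction is a fairly formal combination of Lemma~(\ref{lemma_condition}) and Theorem~(\ref{mainresult}), applied to both $\mathcal{F}$ and $\mathcal{F}^{*}$. The ``if'' direction is the main obstacle. The point of Proposition~(\ref{this_cond}) here is that, after normalization, each composite $d_id_{i+1}$ degenerates to the \emph{same} scalar matrix $\delta I_b$ on the two sides, and the alternating-sign substitution $C_i=(-1)^iB_i$ — the mechanism already visible in Example~(\ref{exnew2}) — is exactly what makes these two contributions cancel; without either ingredient $d_id_{i+1}$ need not vanish. The second delicate issue is producing \emph{total} acyclicity rather than bare acyclicity: this goes through only because the normal form $\tilde{A}_{i-1}\tilde{A}_i=fI_b$ is preserved under transposition, so condition~(\ref{condition}) is inherited by the dual complex and Theorem~(\ref{mainresult}) can be invoked a second time. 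Matching the Betti numbers by direct sums, and checking that each reduction step respects~(\ref{condition}) and total acyclicity, are routine but should be recorded.
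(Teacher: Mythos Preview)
Your proposal is correct and follows essentially the same route as the paper: Lemma~(\ref{lemma_condition}) plus Theorem~(\ref{mainresult}) for the ``only if'' direction, and direct sums to equalize ranks, Proposition~(\ref{this_cond}) to normalize the lifted products to $\pm\,\text{(scalar)}\,I_b$, and then Theorem~(\ref{mainresult}) for the ``if'' direction. Your alternating-sign substitution $C_i=(-1)^iB_i$ is exactly the mechanism behind the paper's assumption $\tilde{B}_{i-1}\tilde{B}_i=-gI_{S_0^b}$, and your explicit treatment of the dual (for both directions) spells out what the paper compresses into the single phrase ``totally acyclic.''
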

\begin{proof}
Assume that $R$ has a minimal totally acyclic complexes. Then the conclusion follows immediately from Theorem~(\ref{mainresult}) and Lemma~(\ref{condition}).

Conversely, assume that $R_1$ and $S_1$ admit minimal totally acyclic complexes such that condition~(\ref{condition}) is satisfied. Replacing each of these complexes by direct sums of copies of themselves if necessary,
we may assume that the free modules in both complexes have the same rank (condition~(\ref{condition}) will continue to hold). Let $A_i :R_1^b \rightarrow R_1^b$ denote the maps in a minimal totally acyclic complex over $R_1$, and let $B_i: S_1^b\rightarrow S_1^b$ be the maps in the complex over $S_1$. 
It follows from  Proposition~(\ref{this_cond}) that we may assume $\tilde{A}_{i-1} \tilde{A}_i = f I_{R_0^b}$ and $\tilde{B}_{i-1}\tilde{B_i} =-gI_{S_0^b}$, where $I_{R_0^b}$, $I_{S_0^b}$ denote the identity functions on these modules.

We have established  in Setup~(\ref{st-up}) that the maps $A_i$ and $B_i$ can be used to construct $d_i: R^b \rightarrow R^b$, $d_i=A_i'+B_i'$. Since $d_{i-1}d_i=A_{i-1}'A'_i-B'_{i-1}B'_i=0$, these maps give rise to a complex of free $R$-modules. Theorem~(\ref{mainresult}) now tells us that this complex is totally acyclic.
\end{proof}

As we have seen in Example~(\ref{ex1}),  the hypothesis~(\ref{condition}) cannot be omitted in the statement of Theorem~(\ref{mainresult}).

 The next example shows that it is possible for $R_1, S_1$ to have minimal totally acyclic complexes, but for the ring $R$ constructed as in ~(\ref{constr2}) to not have any.
\begin{example}
Let
$$
R_1=\frac{k[x_1,y_1,z_1]}{(x_1^2, y_1^2, z_1^2-x_1y_1, x_1z_1, y_1z_1)}, S_1=\frac{k[x_2,y_2,z_2]}{(x_2^2, y_2^2, z_2^2-x_2y_2, x_2z_2, y_2z_2)}
$$
Construct $R$ as in ~(\ref{constr2}), using any choice of $f_1, g_1$ from a minimal system of generators for the defining ideals of $R_1$ and $S_1$.

Note that $R_1, S_1$ are Gorenstein, and therefore they have minimal totally acyclic complexes. However, $\mathrm{dim}_k[(R]_1)=6$ and $\mathrm{dim}_k([R]_2)=3\ne \mathrm{dim}_k([R]_1)-1$, so $R$ does not have minimal totally acyclic complexes by Theorem~(\ref{Yoshino}).
\end{example}

We do not know any examples of non-Gorenstein rings $R_1, S_1$ with $\m_{R_1}^3=\m_{S_1}^3=0$ that have minimal totally acyclic complexes such that the ring $R$  constructed as in ~(\ref{constr2}) does not.

\section{Totally acyclic complexes with prescribed liftings}
Let $R_1=P/I+(f)$ denote a quotient of a polynomial ring $P=k[x_1, \ldots, x_n]$ with $\m_{R_1}^3=0$. Assume that $R_1$ is not Gorenstein and has minimal totally acyclic complexes. Let $R_0=P/(I+\m _{P_1}f)$.

The results of the previous section prompt us to ask the following:

\begin{question}
Is there a minimal totally acyclic complex
$$
\cdots R_1^b \buildrel{A_i}\over\longrightarrow R_1^b \buildrel{A_{i-1}} \over\longrightarrow R_1^b \cdots
$$
such that 
\begin{equation}\label{cond3}
(f)R_0^b \subseteq \mathrm{im}(\tilde{A}_{i-1}\tilde{A}_i) \  \ \forall i \ \ \ ?
\end{equation}
Here,  $\tilde{A}_i$ denotes a lifting of $A_i$ to $R_0$.
\end{question}
Example~(\ref{ex1}) shows that it is possible for $R_1$ to have a minimal totally acyclic complex consisting of modules of rank $b=1$,  but not have any such complex (with free modules of the same rank) satisfying~(\ref{cond3}). However, if we are willing to increase the rank of the free modules in the complex (and under additional assumptions on the minimal totally acyclic complex)  we have the following:

\begin{theorem}\label{specify}
Let $R_1=P/I+(f), R_0=P/(I+\m _P f)$ be as above, where $P$ is a polynomial ring over an algebraically closed field $k$. Assume that $R_1$ has a minimal totally acyclic complex which is periodic with period two, i.e. it has the form
$$
\cdots \rightarrow  R_1^b \buildrel{X}\over\rightarrow R_1^b \buildrel{W}\over\rightarrow R_1^b \buildrel{X}\over\rightarrow R_1^b \buildrel{W}\over\rightarrow \cdots 
$$
Moreover, assume that 
\begin{equation}\label{extra}\tilde{X}\tilde{W}=\tilde{W}\tilde{X},
\end{equation}
 where $\tilde{X}, \tilde{W}$ denote liftings of $X, W$ to $R_0$.

 Assume that $f=y_1z_1+\ldots + y_kz_k$, where $y_i, z_i \in R_0$ are linear.
Then there is a totally acyclic complex
\begin{equation}\label{tacomplex}
\ldots \rightarrow R_1^{2^k b} \buildrel{A}\over\rightarrow R_1^{2^k b}\buildrel{B}\over\rightarrow R_1^{2^k b}\buildrel{A}\over\rightarrow \ldots
\end{equation}
such that 
\begin{equation}\label{here}
(f) R_0^{2^k b} \subseteq \mathrm{im}(\tilde{A}\tilde{B})\cap \mathrm{im}(\tilde{B}\tilde{A}), 
\end{equation}
where $\tilde{A}, \tilde{B}$ denote  matrices with entries in $R_0$ obtained by lifting each entry of $A$, respectively $B$, to $R_0$.
\end{theorem}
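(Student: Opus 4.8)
\emph{Overview.} The plan is to obtain (\ref{tacomplex}) as an iterated tensor product of the given period-two complex with one rank-one matrix factorization $(y_i,z_i)$ of $y_iz_i$ for each $i=1,\dots,k$, and then to prove total acyclicity by reducing it to a linear-algebra statement over $R_0$. Throughout, write $N=2^kb$ and, for a matrix $D$ over $R_0$, let $D_{12}$ denote the induced $k$-linear map on the degree-one strands $[R_0]_1^{(\cdot)}\to[R_0]_2^{(\cdot)}$.

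\emph{Normalization.} Since $XW=WX=0$ over $R_1$, the entries of $\tilde X\tilde W$ and of $\tilde W\tilde X$ lie in $(f)R_0$, and being homogeneous of degree $2$ they are scalar multiples of $f$; together with (\ref{extra}) this gives $\tilde X\tilde W=\tilde W\tilde X=fC$ for a single $C\in\mathrm{Mat}_b(k)$. Replacing $X$ by $\mu X$ for a suitable $\mu\in k$, $\mu\ne 0$ — possible because $k$ is infinite and $\det(\mu C+I_b)$ is a polynomial in $\mu$ with value $1$ at $\mu=0$ — we may assume in addition that $C+I_b$ is invertible; this changes neither minimality, nor exactness, nor (\ref{extra}) of the original complex.

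\emph{Construction and the matrix identity.} Put $\Phi_0=\tilde X$, $\Psi_0=\tilde W$ and, for $1\le j\le k$,
\[
\Phi_j=\begin{pmatrix}\Phi_{j-1}&z_jI\\-y_jI&\Psi_{j-1}\end{pmatrix},\qquad
\Psi_j=\begin{pmatrix}\Psi_{j-1}&-z_jI\\ y_jI&\Phi_{j-1}\end{pmatrix},
\]
where $I=I_{2^{j-1}b}$, so that $\Phi_j,\Psi_j$ are $2^jb\times 2^jb$ matrices over $R_0$ with linear entries. Set $\tilde A=\Phi_k$, $\tilde B=\Psi_k$, and let $A,B$ be their reductions modulo $f$. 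Using only that $y_j,z_j$ are central and that $\Phi_{j-1}\Psi_{j-1}=\Psi_{j-1}\Phi_{j-1}$, an induction on $j$ — with base case $\Phi_0\Psi_0=\Psi_0\Phi_0=fC$, which is where (\ref{extra}) is used — yields
\[
\tilde A\tilde B=\tilde B\tilde A=f\bigl((C+I_b)\otimes I_{2^k}\bigr)=:fM,
\]
with $M$ an invertible scalar matrix. Hence $AB=BA=0$ over $R_1$, so $A,B$ define a complex of the shape (\ref{tacomplex}), minimal because its entries are linear; and $\mathrm{im}(\tilde A\tilde B)=\mathrm{im}(fM)=fR_0^{N}=(f)R_0^{N}$ since $M$ is invertible, which together with the identical statement for $\tilde B\tilde A$ is exactly (\ref{here}).

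\emph{Total acyclicity, the heart of the argument.} By the period-two shape, exactness at one spot forces exactness everywhere; and since every differential is homogeneous of degree one and $\m_{R_1}^3=0$, exactness is a statement about the induced maps of $k$-vector spaces on the strands in degrees $0,1,2$. A length count does the bookkeeping: since $\m_{R_0}f=0$, the map $R_0\to R_1$ is bijective in degrees $\ne 2$ and has one-dimensional kernel $kf$ in degree $2$; combining this with $\mathrm{im}(\tilde A)\cap(f)R_0^N=\mathrm{im}(\tilde A\tilde B)=(f)R_0^N$ and with the facts that $\tilde A\tilde B=\tilde B\tilde A=fM$ forces $\tilde A_{01},\tilde B_{01}$ injective and $(fk)^N\subseteq\mathrm{im}(\tilde A_{12})\cap\mathrm{im}(\tilde B_{12})$, one gets $\ell(\mathrm{im}\,A)=\mathrm{rank}(\tilde A_{12})$ and $\ell(\mathrm{im}\,B)=\mathrm{rank}(\tilde B_{12})$. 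By Theorem~\ref{Yoshino}(b), $\dim_k[R_0]_1=\dim_k[R_0]_2=:t$ and $\ell(R_1)=2t$, so the complex (\ref{tacomplex}) is exact over $R_1$ if and only if the two maps $\tilde A_{12},\tilde B_{12}\colon[R_0]_1^N\to[R_0]_2^N\cong k^{tN}$ are isomorphisms. \emph{This last step is the main obstacle.} The proof is an explicit argument with the block matrices $\Phi_k,\Psi_k$: one shows that a degree-one element of $\ker\tilde A$ (or $\ker\tilde B$) must be $0$, using the exactness of the original complex over $R_1$ (in the form $\ker X=\mathrm{im}\,W$, $\ker W=\mathrm{im}\,X$, with $X,W$ injective on degree zero and surjective onto degree two) and, crucially, the invertibility of $C+I_b$ secured in the normalization step. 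Finally, the $R_1$-dual of (\ref{tacomplex}) is, after reindexing, the complex produced by the same recursion from the dual of the original complex — which is again minimal totally acyclic, period two, satisfies (\ref{extra}), and is built from the same $f=y_1z_1+\dots+y_kz_k$ (with the roles of the $y_i$ and $z_i$ interchanged) — so its exactness follows by the same argument, and (\ref{tacomplex}) is totally acyclic.
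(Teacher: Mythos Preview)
Your construction and the verification of (\ref{here}) are correct: the recursive block matrices $\Phi_j,\Psi_j$ do satisfy $\Phi_k\Psi_k=\Psi_k\Phi_k=f\,(C+I_b)\otimes I_{2^k}$, and your normalization via $\mu$ makes this an invertible multiple of $f$. Your reduction of exactness to the statement ``$\tilde A_{12}$ and $\tilde B_{12}$ are $k$-linear isomorphisms'' is also correct, and the length bookkeeping is fine.

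The gap is exactly where you flag it: you assert that ``an explicit argument with the block matrices'' shows $\ker(\tilde A_{12})=0$, but you do not give one, and I do not see how to produce it from the ingredients you list. The identity $\tilde A\tilde B=fM$ only tells you that $\tilde A_{12}$ is injective on the $N$-dimensional subspace $\mathrm{im}(\tilde B_{01})\subset [R_0]_1^N$, not on all of $[R_0]_1^N$. An induction on $j$ does not run either: for $1\le j<k$ the products $\Phi_j\Psi_j$ reduce over $R_1$ to $(\sum_{i\le j}y_iz_i)I\neq 0$, so the intermediate pairs are not complexes over $R_1$ and you cannot invoke exactness at the inductive step. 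Finally, your parameter $\mu$ does not help here: at $\mu=0$ the ``complex'' $\cdots\to R_1^b\xrightarrow{0}R_1^b\xrightarrow{W}R_1^b\to\cdots$ is not exact, so there is no obvious nonempty point from which a Zariski argument could start.

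The paper closes this gap by a different device. It puts a scalar parameter $\alpha$ on the off-diagonal blocks (rather than on $\tilde X$), so that at $\alpha=0$ the resulting period-two complex is literally a direct sum of $2^k$ shifted copies of the original totally acyclic complex over $R_1$, hence totally acyclic. Total acyclicity is then a finite intersection of Zariski-open conditions on $\alpha$ (the surjectivity of the degree-one strands of $A,B,A^t,B^t$), nonempty because $\alpha=0$ lies in it; hence some $\alpha\neq 0$ works, and for that $\alpha$ one also has (\ref{here}). Your argument would be repaired by introducing such a parameter on the off-diagonal blocks and invoking the same openness/nonemptiness reasoning, rather than attempting a direct injectivity proof.
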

\begin{proof}
Since $XW=WX=0$, the matrices representing $\tilde{X}\tilde{W}$ and $\tilde{W}\tilde{X}$ have entries in $(f)$.
By choosing bases, we may assume that 
$$\tilde{X}\tilde{W}=\tilde{W}\tilde{X}=\mathrm{diag}(0, \ldots,  0, f,  \ldots, f),$$ with the last $b-v$ diagonal entries equal to $f$. If $v=0$, there is nothing to show. Assume $v>0$.
For each $1\le j \le k$, define $Y'_j$ and $ Z'_j$ to be the $b\times b$ matrices $Y'_j :=\mathrm{diag}(y_j, \ldots, y_j, 0, \ldots, 0)$, with $v$ diagonal entries equal to $y_j$, and $Z_j':=\mathrm{diag}(z_j, \ldots, z_j, 0, \ldots, 0)$ with $v$ diagonal entries equal to $z_j$. Let $Y_j=\mathrm{diag}(Y_j', \ldots, Y_j')$, $Z_j=\mathrm{diag}(Z_j', \ldots, Z_j')$ consisting of $j$ diagonal bloks equal to $Y_j'$ and respectively $Z_j'$.

Let $\alpha \in k$. For $1 \le j \le k$, we define $2^j b \times 2^j b$ matrices $\tilde{A}_j, \tilde{B}_j$ recursively as follows:
$$
\tilde{A}_1=\left(\begin{array}{cc} \tilde{X} & \alpha Y_1 \\ -\alpha Z_1 & \tilde{W} \\ \end{array}\right), \ \ \ \ \ \  \tilde{B}_1=\left(\begin{array}{cc} \tilde{W} & -\alpha Y_1\\  \alpha Z_1& \tilde{X}\\ \end{array}\right)
$$
$$\tilde{A}_{j+1}=\left(\begin{array}{cc} \tilde{A}_j & \alpha Y_{j+1}  \\ -\alpha Z_{j+1} & \tilde{B}_j\\ \end{array} \right), \ \ \ \ \  \tilde{B}_{j+1}=\left(\begin{array}{cc} \tilde{B}_j & -\alpha Y_{j+1} \\ \alpha Z_{j+1} & \tilde{A}_j \\ \end{array}\right)
$$
We see that
$$
\tilde{A}_1\tilde{B}_1=\tilde{B}_1\tilde{A}_1=\left(\begin{matrix} \tilde{X}\tilde{W} +\alpha^2 Y_1Z_1 & 0 \\  0 & \tilde{W}\tilde{X}+\alpha^2 Y_1Z_1\end{matrix}\right),
$$
and we get by induction that
$$\tilde{A}_{j}\tilde{B}_{j}=\tilde{B}_j\tilde{A}_j=\mathrm{diag}(\Delta_j, \ldots, \Delta _j),$$
where $\Delta _j$ is the $b \times b$ matrix
$\Delta _j=  \mathrm{diag}(\alpha^2 \sum_{i=1}^j y_iz_i , \ldots, \alpha ^2 \sum_{i=1}^j y_i z_i, f, \ldots, f)$ (with the last $b-v$ entries of each block being equal to $f$), and there are $2^j$ blocks equal to $\Delta _j$ along the diagonal.

In particular, $\tilde{A}_k \tilde{B}_k=\tilde{B}_k\tilde{A}_k$ consists of $2^k$ blocks of size $b \times b$  equal to $\mathrm{diag}(\alpha ^2f , \ldots, \alpha ^2 f, f, \ldots, f)$ along the diagonal, and zeroes otherwise. 

Letting $A$ and $B$ be the matrices obtained by taking the images of the entries of $\tilde{A}_k$ and $\tilde{B}_k$ respectively in $R_0$, it is now clear that~(\ref{tacomplex}) is a complex over $R_0$, and condition~(\ref{here}) is satisfied if $\alpha \ne 0$.

It remains to prove that there are choices of $\alpha \ne 0$ such that~(\ref{tacomplex}) is totally acyclic.

It was shown in  \cite{AV} , Theorem 5.1 that there is a countable intersection $\mathcal{U}$  of nonempty Zariski open sets  in $k={\bf A}_k^1$ such that~(\ref{tacomplex}) is totally acyclic if and only if $\alpha \in \mathcal{U}$. Due to the periodic nature of the complex~(\ref{tacomplex}), in this case we may take $\mathcal{U}$ to be a finite intersection of Zariski open sets. We summarize the argument from \cite{AV} for the convenience of the reader.

Note that $A$ and $B$ give rise to  $k$-linear maps $A',  B': [R_1]_1 ^D\rightarrow [R_1]_2^D$, where $D=2^kb$. The condition that these $k$-linear maps have maximal rank  can be described as the  non-vanishing of certain minors (after choosing vector space bases for $[R_1]_1^D$ and $[R_1]_2^D$), and therefore are open conditions in terms of $\alpha $. Having maximal rank is equivalent to surjectivity, and, recalling that $\mathrm{dim}_k([R_1]_2)=\mathrm{dim}_k([R_1]_1)-1$, it is also equivalent to the fact that the kernel of the $k$-linear maps is $D$-dimensional. Since we have 
$\mathrm{im}(B)\subseteq \mathrm{ker}(A)$ and $\mathrm{im}(A)\subseteq \mathrm{ker}(B)$, this is equivalent to exactness of the complex~(\ref{tacomplex}) (note that $[\mathrm{ker}(A)]_1=\mathrm{ker}(A')$, and $[\mathrm{ker}(A)]_2=([R_1]_2)^D$).

Similar open conditions imposed on the transpose matrices $A^t$ and $B^t$ ensure the acyclicity of the dual complex. 

These open sets are non-empty because~(\ref{tacomplex}) is totally acyclic for $\alpha =0$.
\end{proof}
\begin{corollary}\label{thus}
Let $R_1$, $S_1$, $R$ be as in the hypothesis of Corollary~(\ref{conclusion}). Assume that $R_1$ and $S_1$ have minimal totally acyclic complexes that are periodic of period two, and condition~(\ref{extra}) is satified (for instance, this holds if $R_1$ and $S_1$ have exact zero divisors).
Then $R$ has minimal totally acyclic complexes.
\end{corollary}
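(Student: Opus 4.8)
The strategy is to check the hypotheses of the ``if'' direction of Corollary~(\ref{conclusion}): I will produce minimal totally acyclic complexes over $R_1$ and over $S_1$ for which condition~(\ref{condition}) holds, the complexes being supplied by Theorem~(\ref{specify}). First let me dispose of the parenthetical assertion. If $a,a'$ is a pair of exact zero divisors in $R_1$, then the doubly infinite complex with free modules $R_1$ and alternating maps $a$ and $a'$ is a minimal totally acyclic complex that is periodic of period two with $b=1$; its liftings $\tilde a,\tilde a'\in R_0$ act by multiplication, so $\tilde a\tilde a'=\tilde a'\tilde a$ and~(\ref{extra}) holds. The same applies to $S_1$. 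Thus in all cases covered by the statement, $R_1$ and $S_1$ carry period-two minimal totally acyclic complexes satisfying~(\ref{extra}), so Theorem~(\ref{specify}) applies to each.

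Next I would write $f=y_1z_1+\cdots+y_kz_k$ with every $y_i,z_i$ a linear element of $R_0$; this is possible because a homogeneous lift of $f$ to $P_1$ is a degree-two polynomial, hence a $k$-linear combination of products of pairs of variables. Writing $g$ similarly over $S_0$, Theorem~(\ref{specify}) produces a totally acyclic complex of the form~(\ref{tacomplex}) over $R_1$, of some rank $D=2^kb$, whose matrices $A,B$ have linear entries (so the complex is minimal) and satisfy $(f)R_0^{D}\subseteq \mathrm{im}(\tilde A\tilde B)\cap\mathrm{im}(\tilde B\tilde A)$. Since $AB=BA=0$ forces $\mathrm{im}(\tilde A\tilde B)$ and $\mathrm{im}(\tilde B\tilde A)$ into $(f)R_0^{D}$, both inclusions are equalities. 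Now I would invoke Proposition~(\ref{this_cond}) along this period-two complex, noting that the composite of two consecutive liftings is alternately $\tilde A\tilde B$ and $\tilde B\tilde A$: the equalities $\mathrm{im}(\tilde A\tilde B)=\mathrm{im}(\tilde B\tilde A)=(f)R_0^{D}$ are precisely equivalent to $(f)R_0^{D}\subseteq\mathrm{im}(\tilde A)$ and $(f)R_0^{D}\subseteq\mathrm{im}(\tilde B)$, i.e.\ to condition~(\ref{condition}) for this complex. Running the identical argument over $S_1$ with $g$ in place of $f$ yields a minimal totally acyclic complex over $S_1$ satisfying condition~(\ref{condition}).

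Finally I would feed these two complexes into Corollary~(\ref{conclusion}): after replacing one or both of them by a direct sum of copies of itself so that the free modules acquire a common rank --- which, as in the proof of that corollary, preserves~(\ref{condition}) --- the ``if'' direction delivers a minimal totally acyclic complex over $R$. I do not anticipate a genuine obstacle here: all of the substance has already been carried out in Theorem~(\ref{specify}), and what remains is the bookkeeping that translates its conclusion~(\ref{here}) into condition~(\ref{condition}) through Proposition~(\ref{this_cond}), together with the rank adjustment that is already internal to Corollary~(\ref{conclusion}).
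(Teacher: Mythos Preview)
Your proof is correct and follows exactly the route the paper intends (the paper states the corollary without proof, as an immediate consequence of Theorem~(\ref{specify}) combined with Corollary~(\ref{conclusion})). One small simplification: your appeal to Proposition~(\ref{this_cond}) is unnecessary, since $\mathrm{im}(\tilde A\tilde B)\subseteq\mathrm{im}(\tilde A)$ and $\mathrm{im}(\tilde B\tilde A)\subseteq\mathrm{im}(\tilde B)$ hold trivially, so the conclusion~(\ref{here}) of Theorem~(\ref{specify}) already gives $(f)R_0^{D}\subseteq\mathrm{im}(\tilde A)\cap\mathrm{im}(\tilde B)$ directly, which is condition~(\ref{condition}) for the period-two complex.
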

\begin{note}
Note that the complex~(\ref{tacomplex}) constructed in the proof of Theorem ~(\ref{specify}) under the assumption that $R_1$ has a pair of exact zero divisors is periodic with period two and satisfies condition~(\ref{extra}). Therefore one may start with rings $R_1, S_1$ as above, and construct a family of rings that have minimal totally acyclic complexes by iterating the construction of~(\ref{constr2}).
\end{note}
\begin{example}\label{finalex}
We illustrate the construction of totally acyclic complexes for rings as in Corollary~(\ref{thus}) in the case of the ring $R$ from Example~(\ref{ex1}).
Recall that $R$ was constructed as a connected sum of $R_1, S_1$, where $R_1$ has a pair of exact zero divisors $l_1=x_1+x_2+y_1+y_2+y_3$, $l_1'=x_1+x_2-y_1-y_2-y_3$, and $S_1$ has a pair of exact zero divisors $l_2= x_3+x_4+x_5-y_4-y_5$. We use the construction given in the proof of Corollary~(\ref{conclusion}) to obtain a totally acyclic complex over $R$. The first step is to find totally acyclic complexes over $R_1$ and $S_1$ that satisfy condition~(\ref{condition}). Using the procedure described in the proof of Theorem~(\ref{specify}), we find that
$$
\cdots \rightarrow R_1^2 \buildrel{X_1}\over\rightarrow R_1^2 \buildrel{W_1}\over\rightarrow R_1^2 \buildrel{X_1}\over\rightarrow R_1^2 \buildrel{W_1}\over\rightarrow R_1^2 \rightarrow \cdots
$$
and
$$
\cdots \rightarrow S_1^2\buildrel{X_2}\over\rightarrow S_1^2\buildrel{W_2}\over\rightarrow S_1^2 \buildrel{X_2}\over\rightarrow S_1^2 \buildrel{W_2}\over\rightarrow S_1^2 \rightarrow \cdots
$$
satisfy these requirements, where
$$
X_1=\left( \begin{array}{cc} l_1& x_1 \\ -y_1 & l_1'\\ \end{array}\right), \ \ \ \ W_1=\left(\begin{array}{cc} l_1' & -x_1 \\ y_1 & l_1\\ \end{array}\right)
$$
$$
X_2 = \left( \begin{array}{cc} l_2 & x_4 \\ -y_4 & l_2' \\ \end{array} \right), \ \ \  W_2=\left(\begin{array}{cc} l_2'  & -x_4\\ y_4 & l_2\\ \end{array}\right)
$$
More precisely, we have $\tilde{X}_1\tilde{W}_1=\tilde{W}_1\tilde{X}_1 = f I_{R_1^2}$, and $\tilde{X}_2\tilde{W}_2=\tilde{W}_2 \tilde{X_2}=gI_{S_1^2}$.
This wil ensure that using $d:=X_1+W_1, d':=X_2-W_2$ gives a complex 
$$
\cdots \rightarrow R^2 \buildrel{d}\over\rightarrow R^2 \buildrel{d'}\over\rightarrow R^2 \buildrel{d}\over\rightarrow R^2 \buildrel{d'}\over\rightarrow R^2 \rightarrow \cdots
$$
and Theorem~(\ref{mainresult}) shows that this complex is exact. The same reasoning applies for the dual; therefore this is a totally acyclic complex over $R$.
\end{example}


\begin{thebibliography}{99}

\bibitem{AB} M. Auslander and M. Bridger, {\em Stable module theory}, Memoirs of the American Mathematical Society no. 94, American Mathematical Society, Providence R.I.1969

\bibitem{AAM} H. Ananthnarayan, L. Avramov, and F. Moore, {\em Connected sums of Gorenstein rings}, Crelle's Journal {\bf 667} (2012), 149--176.

\bibitem{ACLY} H. Ananthnarayan, E. Celikbas, J. Laxmi, Z. Yang, {\em Decomposing Gorenstein rings as connected sums},  arXiv:1406.7600

\bibitem{AV} C. Atkins and A. Vraciu, {\em On the existence of non-free totally reflexivem modules}, J. of Commutative Algebra, to appear.

\bibitem{CLW} E. Celikbas, J. Laxmi, J. Weyman, {\em Embeddings of canonical modules and resolutions of connected sums},  arXiv:1704.03072

\bibitem{CPST} L. W. Christensen, G. Piepmeyer, J. Striuli and R. Takahashi, {\em Finite Gorenstein representation type implies simple singularity}, Adv. Math. {\bf 218} (2008), no. 4, 1012--1026.

\bibitem{Kustin-V}, A. Kustin and A. Vraciu {\em Totally reflexive modules over rings that are close to Gorenstein}, J. of Algebra, to appear. 

\bibitem{NSW} S. Nasseh and S. Sather-Wagstaff, {\em Vanishing of Ext and Tor over fiber products}, Proceedings of the Amer. Math. Soc. {\bf 145} (2017) , no, 11, 4661--4674.

\bibitem{Yo} Y. Yoshino, {\em Modules of G-dimension zero over local rings with the cube of the maximal ideal being zero}, Commutative algebra, singularities and computer algebra, (Sinaia 2002), 255--273, NATO Sci. Ser. II Math. Phys. Chem. {\bf 115}, Kluwer Acad. Publ., Dordrecht, 2003.

\end{thebibliography}
\end{document}